\documentclass[11pt, reqno]{amsart}

\usepackage{fullpage,amsmath,amssymb,mathtools,enumerate,amsthm,comment}
\usepackage{makecell}
\usepackage[colorlinks, breaklinks]{hyperref}
\hypersetup{linkcolor=black,citecolor=blue,filecolor=black,urlcolor=black} 
\usepackage{tikz}
\usepackage{cleveref}
\usepackage{ textcomp }
\def\N{\mathbb{N}}

\def\p{\mathfrak{p}}
\def\q{\mathfrak{q}}
\def\mfp{\mathfrak{p}}
\def\mfm{\mathfrak{m}}

\def\mfq{\mathfrak{q}}

\DeclareMathOperator{\codepth}{\mathrm{codepth}}

\DeclareMathOperator{\height}{ht}
\DeclareMathOperator{\h}{ht}
\DeclareMathOperator{\ass}{Ass}
\DeclareMathOperator{\Ass}{Ass}
\DeclareMathOperator{\pdim}{pd}
\DeclareMathOperator{\pd}{pd}
\DeclareMathOperator{\reg}{reg}
\newcommand{\kk}{\mathbb{K}}					% The ground field.

\def\lra{\longrightarrow}

\newtheorem{thm}{Theorem}[section]
\newtheorem{lem}[thm]{Lemma}

\newtheorem{conj}[thm]{Conjecture}
\newtheorem{prop}[thm]{Proposition}
\newtheorem{qst}[thm]{Question}

\theoremstyle{definition}

\theoremstyle{remark}

\title{A Linear Bound on the Projective Dimension of Height 3 Quadratic Ideals}
\author{Zachary Greif}
\address{Iowa State University, Department of Mathematics, Ames, IA, USA}
\email{zsgreif@iastate.edu}

\author{Paolo Mantero}
\address{University of Arkansas, Department of Mathematical Sciences, Fayetteville, AR, USA}
\email{pmantero@uark.edu}

\author{Jason McCullough}
\address{Iowa State University, Department of Mathematics, Ames, IA, USA}
\email{jmccullo@iastate.edu}
\date{\today}

\subjclass[2020]{Primary: 13D02, 13C10; Secondary: 14M07}

\keywords{Free resolution, Stillman's question, projective dimension, quadrics}

\begin{document}
%%%%%%%%%%%%%%%%%%%%%%%%%%%%%%%%%%%%%%%%%%%%%%%%%%%%%%%%%%%%%%%%%%%%%%%%%%%%%%%%%%%%%%%%%%%%%%%%%

\begin{abstract}
    In 2016, Ananyan and Hochster gave the first proof of a positive answer to Stillman's Question, which asked for a bound on the projective dimension of a graded polynomial ideal purely in terms of the number and degrees of its generators.  Explicit formulas for such a bound are limited and often not optimal.  In this paper, we give a nearly optimal linear upper bound on the projective dimension of height $3$ ideals generated by any number of degree $2$ homogenous polynomials.  
\end{abstract}
\maketitle

\section{Introduction}

Around 2000, Stillman posed the following question:

\begin{qst}[{Stillman's Question \cite[Problem 15.8]{PS09}}]
Does there exist a function $P : \N \times \N \to \N$ such that $\pdim_S(S/I) \leq P(n, d)$ holds for any polynomial ring $S$ over a field in some finite number of variables and any graded ideal $I\subseteq S$  generated by $n$ forms of degree at most $d$?
\end{qst}
\noindent That is, can we bound the projective dimension of $S/I$ in terms of the number of generators of $I$, and the degree of those generators, independent of the number of variables? The question is, in some sense, orthogonal to Hilbert's Syzygy theorem, which guarantees that the projective dimension is bounded by the number of variables of $S$.  For a detailed background on this problem, we refer the reader to \cite{MS13}, \cite{FMP16}, or \cite{ESS19}. 

The question was first positively answered by Ananyan and Hochster \cite{AH1}. Proofs employing different techniques were also given by Erman, Sam, and Snowden \cite{ESS} and Draisma, Lason, and Leykin \cite{DLL}. The proofs are either non-constructive, or rely on complicated recursive procedures, so these results raised the question of finding explicit upper bounds. 

A first observation is that any bounds must be quite large. Examples in \cite{Mc2} show that, when $I$ is generated by $3$ forms of degree $d$, there is a large lower bound: $P(3,d) \ge \left(\frac{d-1}{4}\right)^{(d-1)/2}+1$.
Recently, Caviglia, Liang, and Meng \cite{CLM25} gave bounds in the form of power-towers. While they are far from the conjectured optimal bounds they are the first explicit bounds for the general question. 
If one imposes some restrictions, better bounds are possible. For instance, in the quadratic case, Ananyan and Hochster \cite{AH2} proved that $P(n, 2) = 2^{n+1}(n-2) + 4$ suffices, but they conjectured that the general bound for $n$ quadrics should be quadratic in $n$. Specifically, they made the following conjecture; see also \cite[Question 6.2]{HMMS4}.
\begin{conj}[Ananyan and Hochster {\cite[Conjecture 11.4]{AH2}}]\label{quadboundconj}
If $S$ is a polynomial ring over a field and $I$ an ideal of $S$ generated by $n$ quadrics and of height $h$, then $\pd_S(S/I) \le h(n - h + 1)$.
\end{conj}
\noindent If true, this bound is necessarily tight, since the ideal
\[\left(x_1^2, x_2^2, \hdots, x_h^2, \sum_{k=1}^h a_{1,k}x_k, \hdots, \sum_{k=1}^h y_{n-h,k}x_k\right) \subseteq \kk[x_1,..,x_h,y_{1,1},\ldots,y_{n-h,k}]\]
has height $h$ and projective dimension exactly $h(n-h+1)$ if the $x_i$ and $a_{i,j}$ are all linearly independent elements of $S_1$; see  \cite[Theorem 3.3]{M1}.  Huneke et. al. \cite[Theorem 1.11]{HMMS1}, showed that Conjecture~\ref{quadboundconj} is true when $h \le 2$.
More generally, Ananyan and Hochster conjecture \cite[Conjecture 11.4]{AH2} that $P(n,d)$ can be taken of the form $C_d n^d$, where $C_d$ is a constant depending only on $d$.  If Conjecture~\ref{quadboundconj} is true, it would imply that $P(n,2) = \frac{(n+1)^2}{4}$ would suffice. 
We present a table of currently known nontrivial, explicit bounds in Table~\ref{table1}.

\begin{table}[htb]
\begin{center}
\begin{tabular}{|c|c|c|c|c|}
  \hline
  $d$ & $n$ & $h$ & Bound & Reference \\
  \hline
  $2$ & -- & -- & $2^{n+1}(n-2)+4$ & \cite[Theorem 1.11]{AH2} \\
  \hline
    $2$ & -- & 2 & $2n-2$ & \cite[Theorem 3.5]{HMMS1} \\
    \hline
  $2$ & 3 & -- & 4 & \cite[Theorem 1]{MS13} \\
  \hline
  $2$ & 4 & -- & 6 & \cite[Theorem 1.3]{HMMS3} \\
  \hline
  3 & 3 & -- & 5 & \cite[Main Theorem 1]{MM19}  \\
  \hline
  -- & -- & -- & $(d+1)$\textuparrow $(d^2+d+1)$\textuparrow $n$   & \cite[Theorem A]{CLM25} \\
  
  \hline
\end{tabular}
\end{center}
  \caption{Known explicit upper bounds for $\pd(S/I)$ for graded ideals $I \subseteq S$ of height $h$ generated by at most $n$ forms of degree at most $d$}\label{table1}
\end{table}
\noindent There are additional related results for cubics and quartics in \cite{AH2} which lead to very large values for $P(n,d)$.

The purpose of this paper is to give a close-to-optimal bound on the projective dimension of ideals generated by quadrics of height $3$.  Specifically, we show in Theorem~\ref{mainthm} that if $I = (f_1,\ldots,f_n)$ is a graded ideal of height $3$ generated by $n$ quadrics in a polynomial ring over a field, then \[\pd_S(S/I) \le \max\{3n+1,4n-10\},\]
which is close to the conjectured optimal bound of $3n - 6$ and much smaller than the exponential bound in \cite{AH2}.  Our methods are similar to those in \cite{HMMS1} and \cite{HMMS3} and we address why these methods likely can't be extended much further.  

The following section collects the necessary background results we need.  Section~\ref{SecBounds} contains the technical arguments needed for each possible multiplicity case.  The final Section~\ref{SecMain} contains the proof of the main result.

\section{Background}

Throughout, we let $\kk$ be a field, not necessarily algebraically closed, and $S$ be a polynomial ring over $\kk$ in some number of variables. We fix a graded ideal $I = (f_1, \hdots, f_n)$  of $S$, generated by quadrics unless specified otherwise. 

Any finitely generated graded $S$-module $M$ has a unique finite graded free resolution $\mathbf{F}_\bullet$ over $S$.  The ranks $\beta_{ij}$ of the graded free modules $F_i = \oplus_j S(-j)^{\beta_{ij}}$ are called the graded Betti numbers of $M$ and are conveniently displayed in its graded Betti table, where $\beta_{ij}$ is placed in column $i$ and row $j-i$.  One then easily reads off the projective dimension of $M$ as $\pd_S(M) = \max\{i\mid \beta_{ij} \neq 0\}$.  

We wish to give bounds for $\pdim_S(S/I)$ when $I$ is generated by quadrics and $\height(I) = 3$.  We will break this problem down based on multiplicity, $e(S/I)$. The associativity formula for multiplicity will allow us to reduce many cases to ones where $I$ has an associated prime of height 3 and a specified multiplicity. 

\begin{thm}[{Associativity Formula for Multiplicity cf. \cite[Theorem 14.7]{matsumura}}]
  If $I$ is an ideal of $S$, then
  \[e(S/I) = \sum_{\substack{\mfp \in \ass(I) \\ \height(\mfp) = \height(I)}} e(S/\mfp)\lambda(S_\mfp/I_\mfp).\]
\end{thm}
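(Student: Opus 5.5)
We prove the Associativity Formula by reducing to a length computation over a finite extension, using a filtration of $S/I$ by prime cyclic quotients. Throughout, $S/I$ is a graded module, and $e(\cdot)$ is the multiplicity read off from the leading coefficient of the Hilbert polynomial, normalized by the dimension $d=\dim S/I = \dim S - \height(I)$. For a finitely generated graded module $M$ with $\dim M\le d$, set $e_d(M)$ equal to $(d-1)!$ times the coefficient of $t^{d-1}$ in its Hilbert polynomial (taken as $0$ when $\dim M<d$). The key point is that $e_d$ is additive on short exact sequences of graded modules of dimension at most $d$. This holds because Hilbert functions are additive, so Hilbert polynomials are additive and their degree-$(d-1)$ coefficients add.

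\textbf{Step 1: filtration.} Choose a graded prime filtration
\[0=M_0\subset M_1\subset\cdots\subset M_r=S/I,\qquad M_i/M_{i-1}\cong (S/\mfp_i)(-a_i),\]
with each $\mfp_i$ a graded prime containing $I$. This is the standard construction that repeatedly takes annihilators of homogeneous elements. Additivity gives
\[e(S/I)=\sum_i e_d(S/\mfp_i).\]
A twist does not change the leading coefficient. Moreover $e_d(S/\mfp_i)=0$ unless $\dim S/\mfp_i=d$, which for $\mfp_i\supseteq I$ means $\height\mfp_i=\height I$. Such a $\mfp_i$ is then minimal over $I$, so it lies in $\ass(I)$, and in that case $e_d(S/\mfp_i)=e(S/\mfp_i)$.

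\textbf{Step 2: counting via localization.} Fix a prime $\mfp$ minimal over $I$ of height $\height(I)$, and localize the filtration at $\mfp$. Any factor with $\mfp_i\neq\mfp$ satisfies $\mfp_i\not\subseteq\mfp$. Indeed, $\mfp_i\supseteq I$ and $\mfp$ is minimal over $I$, so $\mfp_i\subseteq\mfp$ would force $\mfp_i=\mfp$. Hence every such factor vanishes after localizing at $\mfp$. The factors with $\mfp_i=\mfp$ localize to $\kappa(\mfp)=S_\mfp/\mfp S_\mfp$. Therefore the number of indices $i$ with $\mfp_i=\mfp$ equals $\lambda(S_\mfp/I_\mfp)$, which is finite since $S_\mfp/I_\mfp$ is Artinian. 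Grouping the terms of Step 1 by prime gives exactly the stated sum.

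The main obstacle is Step 1's existence of a \emph{graded} prime filtration together with the additivity of $e_d$ in the correct dimension. For the filtration, take a homogeneous element whose annihilator is an associated prime; graded associated primes exist because associated primes of graded modules are graded. Then induct on the module using Noetherianity. Everything else follows formally. Alternatively, one may cite \cite[Theorem 14.7]{matsumura} for the local version and pass to the graded setting by localizing at the homogeneous maximal ideal, where graded and local multiplicities agree.
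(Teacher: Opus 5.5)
Your proof is correct, and it is the standard argument behind the result the paper cites: the paper gives no proof beyond \cite[Theorem 14.7]{matsumura}, which is proved the same way (additivity of the degree-$d$ multiplicity on short exact sequences plus a prime filtration), and you have run that argument directly in the graded setting, using $\height\mfp+\dim S/\mfp=\dim S$ in a polynomial ring to match the condition $\dim S/\mfp=d$ with $\height\mfp=\height I$. The only blemish is that your normalization of $e_d$ as $(d-1)!$ times the $t^{d-1}$-coefficient of the Hilbert polynomial is undefined when $d=0$; either handle the $\mfm$-primary case separately, where both sides equal $\lambda(S/I)$, or define $e_d(M)=Q_M(1)$ with $H_M(t)=Q_M(t)/(1-t)^d$, which is additive for every $d\ge 0$.
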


A graded prime ideal is nondegenerate if it does not contain any linear forms and thus defines a variety not contained in any hyperplane of the ambient projective space.  When $\kk$ is algebraically closed, there is a classical relationship between the height and multiplicity of nondegenerate prime ideals. 

\begin{prop} \label{multheightineq} (cf. \cite[Corollary 18.12]{harris})
  If $\kk$ is algebraically closed and $\mfp$ is a nondegenerate graded prime ideal, then \[e(S/\mfp) \geq \height(\mfp) + 1.\]
\end{prop}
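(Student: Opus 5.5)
This is the classical bound $\deg X \ge \operatorname{codim} X + 1$ for a nondegenerate irreducible projective variety $X \subseteq \mathbb{P}^{N}$, where $X = V(\mfp)$, $N = \dim S - 1$, and $\deg X = e(S/\mfp)$. The plan is to cut $X$ down to a curve by general hyperplanes and then apply a general-position argument to the resulting curve. Throughout, $c := \height(\mfp) = \operatorname{codim} X$ and $r := \dim X$, so that $N = r + c$.

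\emph{Reducing to a curve.} If $r = 0$, then $X$ is a single point, since $\kk$ is algebraically closed and $\mfp$ is prime. A point is degenerate as soon as $N \ge 1$. So either $N = 0$ and the inequality is trivial, or this case cannot occur. Assume now $r \ge 1$. Intersect $X$ with $r-1$ general hyperplanes to obtain $C = X \cap L$, where $L \cong \mathbb{P}^{c+1}$ is a general linear subspace.

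We need three properties of $C$:
\begin{enumerate}[(i)]
\item $\deg C = \deg X$. This holds because a general linear section preserves degree. Algebraically, it is the statement that $e$ is unchanged modulo a general linear form that is regular on $S/\mfp$.
\item $C$ is irreducible when $r - 1 \ge 1$, and reduced. This follows from Bertini's theorem, applied one hyperplane at a time while the dimension is at least $2$.
\item $C$ spans $L$. The key point is that a general hyperplane section of a nondegenerate irreducible variety of dimension at least $1$ is nondegenerate in the hyperplane. Equivalently, $H^0(\mathcal{I}_X(1)) = 0$ implies $H^0(\mathcal{I}_{X\cap H,H}(1)) = 0$. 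This is seen from the exact sequence $0 \to \mathcal{I}_X \to \mathcal{I}_X(1) \to \mathcal{I}_{X\cap H, H}(1) \to 0$ together with $H^1(\mathcal{I}_X) = 0$. The vanishing of $H^1(\mathcal{I}_X)$ holds because $H^0(\mathcal{O}_{\mathbb{P}}) \to H^0(\mathcal{O}_X)$ is surjective, which follows from $X$ being reduced, irreducible and projective over an algebraically closed field.
\end{enumerate}

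\emph{The curve case.} Now $C \subseteq \mathbb{P}^{c+1}$ is an irreducible nondegenerate curve, and we must show $\deg C \ge c+1$. Since $C$ spans $\mathbb{P}^{c+1}$, we can choose $c+1$ points of $C$ that span a hyperplane $H$. Inductively, pick points that are not in the span of the previous ones. This is possible because $C$ is not contained in any proper linear subspace, so $C \not\subseteq \operatorname{span}(p_1,\dots,p_i)$ for $i \le c+1$.

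Since $C$ is irreducible and not contained in $H$, the intersection $C \cap H$ is finite. Therefore
\[\deg C \ge \#(C\cap H) \ge c+1.\]
Combining this with (i) gives
\[e(S/\mfp) = \deg X = \deg C \ge c+1 = \height(\mfp)+1.\]

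\emph{Main obstacle.} The delicate step is (iii), the preservation of nondegeneracy under general hyperplane sections, which requires the cohomology vanishing $H^1(\mathcal{I}_X) = 0$. When $r = 1$ no slicing is needed at all. To avoid sheaf cohomology, one could instead argue directly on points: choose the $c+1$ points of $X$ spanning a hyperplane $H$, and then bound $\deg X \ge \deg(X\cap H)$ by counting points in a general linear section containing these points. This would require a uniform-position-style statement.
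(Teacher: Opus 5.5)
Your proof is correct, but it takes a different route from the paper, which gives no argument and only cites Harris. The standard argument behind that citation is that a general linear space $\Lambda\cong\mathbb{P}^{c}$ of complementary dimension meets $X$ in exactly $\deg X$ distinct points that span $\Lambda$ (indeed they lie in linearly general position), so there are at least $c+1$ of them.

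You instead slice only down to a curve, and show that nondegeneracy survives each general hyperplane section using $H^0(\mathcal{I}_X(1))=0$ and $H^1(\mathcal{I}_X)=0$. You then avoid any statement about a general section of the curve by taking a \emph{special} hyperplane $H$ spanned by $c+1$ points of $C$, together with the bound $\#(C\cap H)\le \deg C$ for any hyperplane not containing the irreducible curve $C$. That bound deserves one line of justification: $h$ is a nonzerodivisor on the coordinate ring of $C$, so the zero-dimensional scheme $C\cap H$ has degree $\deg C$, and its length is at least its number of points.

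What your route buys is that the final step needs no general-position or reducedness input for zero-dimensional sections. The cost is that you rely on Bertini's irreducibility and reducedness theorems at every intermediate slice. Reducedness is genuinely used there, because the cohomology sequence only shows that the \emph{scheme} $X\cap H$ lies in no hyperplane of $H$, not the underlying set. Since the paper allows an arbitrary algebraically closed field, you should invoke the characteristic-free versions of these Bertini statements for embedded varieties, which do hold.

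One minor repair in (i): after the first cut, $S/(\mfp+(\ell_1))$ can have depth $0$, so there may be no linear nonzerodivisor on it. Before choosing the next linear form, replace it by the saturated ideal of the section, which is prime by (ii); the multiplicity is unaffected.
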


\noindent We can apply this theorem iteratively to analyze and classify prime ideals of small height and multiplicity.  In particular, if $\mfp$ is a nondegenerate graded prime ideal with $e(S/\mfp) = \height(\mfp) + 1$, then $\mfp$ is said to be a prime ideal of \emph{minimal multiplicity}. If instead the equality $e(S/\mfp) = \height(\mfp) + 2$ is satisfied, $\mfp$ is said to be of \emph{almost minimal multiplicity}.

Prime ideals of minimal multiplicity (and their corresponding varieties) were completely classified by Bertini in 1907 \cite{bertini}, extending work on surfaces by Del Pezzo. A modern presentation of this classification is given by Eisenbud and Harris \cite{minimalmult}. 

\begin{thm}[{Classification of primes of minimal multiplicity, cf. \cite[Theorem 1]{minimalmult}}]\label{minmult}
  
  Suppose $\kk$ is algebraically closed.  If $\mfp \subseteq S$ is a nondegenerate graded prime ideal such that $e(S/\mfp) = \height(\mfp) + 1$, then $\mfp$ falls into one of three cases:

  \begin{enumerate}[(i)]
  \item \[\mfp = I_2\begin{pmatrix} x_0 & x_1 & x_2 \\ x_1 & x_3 & x_4 \\ x_2 & x_4 & x_5 \end{pmatrix},\]
    the ideal of $2 \times 2$ minors of a generic symmetric matrix,\\
    
  \item  \[\mfp = I_2\left(\begin{array}{cccc|ccc|cc}
                        x_{1,0} & x_{1,1} & \cdots & x_{1, a_0 - 1} & x_{1, 0} & \cdots & x_{1, a_1 - 1} & \cdots & x_{\ell, a_{\ell} - 1} \\
                        x_{1,1} & x_{1,2} & \cdots & x_{1, a_0} & x_{1, 1} & \cdots & x_{1, a_1} & \cdots & x_{\ell, a_{\ell}} \\
                      \end{array}\right),\]
                  the ideal of $2 \times 2$ minors of a $2 \times (\sum_i a_i)$ matrix for some positive integers $\ell, a_1, \hdots, a_\ell$, or\\
                  \item \[\mfp = (q),\]
                  that is, $\mfp$ is principly generated by an irreducible quadric.
   \end{enumerate}
\end{thm}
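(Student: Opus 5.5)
We would follow the classical route of Del Pezzo, Bertini and Eisenbud--Harris: translate to geometry and induct on dimension using general hyperplane sections. Let $X=V(\mfp)\subseteq \mathbb{P}^r$ be the irreducible nondegenerate variety of dimension $k$ and codimension $c=\height(\mfp)-1$ in $\mathbb{P}^r$. The hypothesis says $\deg X=c+1$, and we may assume $\mfp$ contains no linear forms. If $c=1$, then $X$ is a nondegenerate irreducible hypersurface of degree $2$. Hence $\mfp=(q)$ for an irreducible quadric, which is case (iii). So we assume $c\ge 2$, so that $\mfp$ is not principal, and aim for cases (i) and (ii). Throughout, the forms displayed in (i) and (ii) are understood up to a linear change of coordinates, and $S$ may contain extra variables that do not appear; in the latter situation $X$ is a cone.

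\emph{Step 1 (curves).} Let $C\subseteq\mathbb{P}^c$ be a nondegenerate irreducible curve of degree $c$. Choose $c-1$ general points of $C$; they span a $\mathbb{P}^{c-2}$. Projecting from this $\mathbb{P}^{c-2}$ gives a map $C\dashrightarrow\mathbb{P}^1$ of degree $c-(c-1)=1$, since a hyperplane through the $c-1$ points meets $C$ in exactly one further point. Thus $C$ is rational, and it is embedded by a $(c+1)$-dimensional subsystem of $|\mathcal O_{\mathbb{P}^1}(c)|$, which is the complete linear system. Hence $C$ is the rational normal curve. Its ideal is the ideal of $2\times 2$ minors of a $2\times c$ Hankel (catalecticant) matrix, which is case (ii) with $\ell=1$.

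\emph{Step 2 (hyperplane sections and algebraic properties).} For a general hyperplane $H$, Bertini's theorem gives that $X\cap H$ is irreducible, reduced, nondegenerate in $H$, and of the same codimension and degree. So $X\cap H$ is again of minimal multiplicity, and cutting down we reach the rational normal curve of Step~1. The rational normal curve is arithmetically Cohen--Macaulay with a $2$-linear resolution (an Eagon--Northcott complex). We then lift this along the hyperplane sections. We show inductively that $S/\mfp$ is Cohen--Macaulay with ideal generated by $\binom{c+1}{2}$ quadrics and linear Eagon--Northcott type resolution. This uses the exact sequence for multiplication by a general linear form together with a regularity bound: $\reg(\mfp)\le 2$ follows because the Hilbert function of a general section is forced to be the minimal one, $1+(c+1)t$ in the curve case.

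\emph{Step 3 (determinantal structure).} This is the heart of the argument and the main obstacle: passing from ``$2$-linear resolution with the right numbers'' to an explicit $2\times(c+1)$ matrix, or to the symmetric $3\times 3$ matrix. Our first attempt would follow Eisenbud--Harris. Take the rank-one linear subspaces swept out on $X$: since hyperplane sections are rational normal curves, $X$ carries a one-parameter family of $(k-1)$-planes coming from the pencil defining the $2\times c$ structure. Using the linear first syzygies, we would show that $\mfp$ is the ideal of $2\times 2$ minors of a $1$-generic $2\times(c+1)$ matrix of linear forms. By the Kronecker--Weierstrass classification of $1$-generic pencils, such a matrix can be put in the block Hankel form of (ii); these are the rational normal scrolls and their cones. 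The exception arises when no such pencil of planes exists. This happens only for surfaces in $\mathbb{P}^5$ whose hyperplane sections are quartic curves but which contain no lines through general points. We would handle it by projecting from a general point of $X$: this yields a surface of minimal degree in $\mathbb{P}^4$, which is a cubic scroll. Analyzing the exceptional curve then identifies $X$ as the Veronese surface, whose ideal is given by the $2\times2$ minors of the generic symmetric matrix in (i). Finally, we would check that cones over these are again of the listed form, since extra variables are simply absent.
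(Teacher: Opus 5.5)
The paper does not prove this statement; it quotes it from Eisenbud--Harris, so the benchmark is the classical argument. Your Steps 1 and 2 are the standard ones and are essentially fine, apart from bookkeeping: the codimension of $X$ in $\mathbb{P}^r$ is $\height(\mfp)$, not $\height(\mfp)-1$, and Step 1's curve in $\mathbb{P}^c$ has codimension $c-1$, not $c$.

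The genuine gap is Step 3. It carries the entire content of the classification, and there it is asserted rather than proved. Nothing from Step 2 can produce the $2\times(c+1)$ matrix, because every variety of minimal degree of codimension $c$ has the Eagon--Northcott Betti table. In particular the Veronese surface has Betti numbers $1,6,8,3$, exactly those of a surface scroll in $\mathbb{P}^5$.

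Worse, the Veronese surface's general hyperplane section is a rational normal quartic whose $2\times 4$ Hankel structure does \emph{not} lift. Suppose $\mfp=I_2(M)$ for a $2\times 4$ matrix $M$ of linear forms. For each $(s:t)$, the four forms $sM_{1j}+tM_{2j}$ cut out a linear space of dimension at least $1$ in $\mathbb{P}^5$. On it $M$ has rank at most $1$, so $X$ would contain lines, and the Veronese surface contains none.

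So ``the pencil defining the $2\times c$ structure'' on a section does not by itself give a family of $(k-1)$-planes on $X$. The phrase ``using the linear first syzygies'' has no mechanism behind it. Your claim that the only exception is a surface in $\mathbb{P}^5$ is exactly the theorem to be proved. It contains two nontrivial facts: surfaces of minimal degree in $\mathbb{P}^r$, $r\neq 5$, have a line through a general point; and no non-cone variety of dimension $\ge 3$ has Veronese surfaces as its surface sections.

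The missing idea is to use projection from a point in \emph{every} case, inducting on codimension rather than on dimension.
\begin{enumerate}[(1)]
\item A point $p\in X$ of multiplicity $m\ge 2$ must be a vertex. Otherwise projection from $p$ gives a nondegenerate $k$-dimensional $X'\subseteq\mathbb{P}^{r-1}$ with $\deg X'\le \deg X-m\le r-k-1$. This contradicts $\deg X'\ge \operatorname{codim}X'+1=r-k$. Hence $X$ is a cone over a smooth variety of minimal degree, and you may assume $X$ is smooth.
\item For general $p\in X$, the same degree count shows $\pi_p$ is birational onto some $X'$ of minimal degree in $\mathbb{P}^{r-1}$. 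Moreover $X'$ contains the $(k-1)$-plane $\Lambda=\pi_p(\mathbb{T}_pX)$, the image of the exceptional divisor.
\item By induction, $X'$ is a quadric, a (cone over a) scroll, or the Veronese surface or a cone over it. The last option is excluded: $\Lambda$ would map, after projecting from the vertex if there is one, onto a positive-dimensional linear space inside the Veronese surface, which contains no lines. This is what rules out all higher-dimensional exceptions.
\item Inverting the projection and analysing the possible planes $\Lambda$ shows that $X$ is a scroll. The one exception is $X'=S(1,2)\subseteq\mathbb{P}^4$ with $\Lambda$ its directrix line $E_0$. Then $\mathrm{Bl}_pX\cong\mathbb{F}_1$ is mapped by $|2(E_0+F)|$, so $X$ is $\mathbb{P}^2$ embedded by conics, i.e.\ the Veronese surface.
\end{enumerate}
Only after the scroll structure has been established geometrically, which gives a $1$-generic $2\times(c+1)$ matrix, does your Kronecker--Weierstrass step put the matrix into the block Hankel form (ii).
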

       
\noindent We note that case (i) defines (a cone over) the Veronese surface $\mathbb{P}^2 \subset \mathbb{P}^5$ and case (ii) defines (a cone over) various rational normal scrolls.  In particular, case (i) has codimension $3$, while the codimension in case (ii) is $(\sum_i a_i) - 1$.

Brodmann and Schenzel  \cite{BS06} gave a classification of nondegenerate primes of almost minimal multiplicity  when $\height(\mfp) \leq 4$. In particular, when $\height(\mfp) = 3$, this classification gives us three possible Betti tables for $S/\mfp$. 

\begin{thm}[{Classification of primes of almost minimal multiplicity - height 3 case \cite[Theorem 2.2]{BS06}}]\label{AMM-bound}  Suppose $\kk$ is algebraically closed. 
  If $\mfp \subseteq S$ is a nondegenerate graded prime ideal with $\height(\mfp) = 3$ and $e(S/\mfp) = 5$ which is not a cone, and let $X = V(\mfp)$ be the associated projective variety.  One of the following holds:
  \begin{enumerate}[(i)]
      \item \vspace{.3cm}$\dim X \le 4$ and $\mfp$ is generated by the five $4 \times 4$ Pfaffians of a skew-symmetric $5 \times 5$ matrix of linear forms. Its minimal free resolution is given by the Buchsbaum–Eisenbud complex with the following Betti table.\\
         \[ \begin{tabular}{r|ccccc}
      &$0$&$1$&$2$&$3$\\
      \hline
      $0:$&$1$&\text{-}&\text{-}&\text{-}\\$1:$&\text{-}&$5$&$5$&\text{-}\\$2:$&\text{-}&\text{-}&\text{-}&$1$\\
      \end{tabular}\]
      
      \item \vspace{.3cm} $\dim X \le 4$ and $\codepth(S/\mfp)  = 1$. The Betti table $S/\mfp$ has the following form.\\
       \[ \begin{tabular}{r|cccccc}
      &$0$&$1$&$2$&$3$&$4$\\
      \hline
      $0:$&$1$&\text{-}&\text{-}&\text{-}&\text{-}\\$1:$&\text{-}&$4$&$2$&\text{-}&\text{-}\\$2:$&\text{-}&$1$&$6$&$5$&$1$\\
      \end{tabular}\]
      
    \item \vspace{.3cm} $\dim X \le 5$ and $\codepth(S/\mfp) = 2$. The Betti diagram of $S/\mfp$ has the following form.\\
      \[ \begin{tabular}{r|ccccccc}
      &$0$&$1$&$2$&$3$&$4$&$5$\\
      \hline
      $0:$&$1$&\text{-}&\text{-}&\text{-}&\text{-}&\text{-}\\$1:$&\text{-}&$3$&$2$&\text{-}&\text{-}&\text{-}\\$2:$&\text{-}&$6$&$16$&$15$&$6$&$1$\\
            \end{tabular}\]
  \end{enumerate}
\end{thm}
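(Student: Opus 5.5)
Since this is the height $3$ case of the Brodmann--Schenzel classification, I would reconstruct it by reducing to curves (or points) through general linear sections, and then lifting the resulting Betti tables back up. Let $X=V(\mfp)\subseteq\mathbb{P}^N$ be nondegenerate and not a cone, with $\operatorname{codim} X=3$ and $\deg X=5=\operatorname{codim}X+2$.

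\emph{Step 1 (sectional genus).} Cut $X$ by $\dim X-1$ general hyperplanes to get an irreducible nondegenerate curve $C\subseteq\mathbb{P}^4$ of degree $5$. Castelnuovo's bound for a curve of degree $r+1$ in $\mathbb{P}^r$ gives arithmetic genus $g(C)\le 1$. This splits the argument into the cases $g=1$ and $g=0$.

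\emph{Step 2 (the case $g=1$).} Here $C$ is a linearly normal elliptic quintic, hence arithmetically Gorenstein of codimension $3$. By Buchsbaum--Eisenbud its ideal is given by the five $4\times 4$ Pfaffians of a $5\times5$ skew matrix of linear forms, with the first Betti table. Since $S/\mfp$ is then Cohen--Macaulay, the hyperplane sections form a regular sequence, so the resolution lifts unchanged to $X$. This is case (i).

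\emph{Step 3 (the case $g=0$).} Now $X$ is not arithmetically Cohen--Macaulay, and by Fujita/Brodmann--Schenzel $X$ is an isomorphic linear projection, from a point $p\notin\widetilde X$, of a variety $\widetilde X\subseteq\mathbb{P}^{N+1}$ of minimal multiplicity. By Theorem~\ref{minmult}, $\widetilde X$ is a rational normal scroll or the Veronese surface. I would compute as follows.
\begin{enumerate}[(a)]
\item The Gruson--Lazarsfeld--Peskine bound gives $\reg X\le \deg X-\operatorname{codim}X+1=3$. Hence $\reg(S/\mfp)\le 2$, and the Betti table has only rows $1$ and $2$.
\item The Hilbert function is determined by comparing with $\widetilde X$. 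The failure of linear normality by one dimension gives $H^1(\mathcal I_X(1))=\kk$, and this is the only nonvanishing intermediate cohomology in the relevant range. It forces $\operatorname{depth} S/\mfp=\dim S/\mfp-1$ or $-2$, according to whether $\dim X$ is small or $\widetilde X$ is a scroll of higher dimension.
\item From the Hilbert series and the two-row shape, the Betti numbers follow up to cancellation. I would kill the possible ghost terms using the number of linear syzygies among the quadrics. These number $4$ resp.\ $3$, obtained by counting quadrics through $\widetilde X$ containing the secant cone from $p$. This yields tables (ii) and (iii).
\end{enumerate}

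\emph{Step 4 (dimension bounds).} The bounds $\dim X\le 4$ and $\dim X\le5$ come from the requirement that the projection be an isomorphism. A projection from $p$ is an isomorphism only if $p$ lies off the secant variety of $\widetilde X$. For a scroll of dimension $m$ and codimension $4$ in $\mathbb{P}^{N+1}$, $\operatorname{Sec}\widetilde X$ fills the ambient space once $2m+1\ge N+1$. This bounds $m$, and the not-a-cone hypothesis excludes the degenerate scroll types.

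\emph{Main obstacle.} I expect Step 3 to be the hardest. Pinning down the codepth, and ruling out ghost cancellations in the Betti table, requires precise control of the local cohomology $H^i_{\mfm}(S/\mfp)$ of the projected variety. I would attack it through the exact sequence $0\to S/\widetilde{\mfp}'\to \widetilde{A}\to \kk(-1)\to 0$ relating the coordinate ring of $X$ to that of $\widetilde X$ restricted to $\mathbb{P}^N$, and then read off $H^1_{\mfm}$ and $H^2_{\mfm}$ directly.
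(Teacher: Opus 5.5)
The paper does not prove this statement; it quotes it from Brodmann--Schenzel \cite{BS06}, so your outline has to stand on its own. Steps~1--2 are sound: Castelnuovo gives sectional genus $g\le 1$, for $g=1$ the curve section is arithmetically Gorenstein and hence Pfaffian, and the ACM property lifts from general hyperplane sections.

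\textbf{The gap: isomorphic projections.} Steps~3--4 assume that a non-ACM $X$ is an \emph{isomorphic} projection of $\widetilde X$. Isomorphic projections are only the non-linearly-normal (depth $1$) case. In general the structure theorem gives only $X=\pi_p(\widetilde X)$ for some $p\notin\widetilde X$. The point $p$ may lie on secant lines of $\widetilde X$, and then $X$ is non-normal.

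This is not a marginal case. Here $\widetilde X$ has codimension $4$ and degree $5$, so it is not the Veronese surface. It is a scroll $S(a_1,\dots,a_m)$ with $\sum a_i=5$ and all $a_i\ge 1$, since a cone $\widetilde X$ would project to a cone. For $m\ge 3$ its secant variety fills $\mathbb{P}^{m+4}$. Hence for $\dim X\ge 3$ no isomorphic projection exists at all, and your Step~4, applied consistently, gives $\dim X\le 2$ rather than $4$ and $5$.

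Step~3(b) has the same defect. $H^1(\mathcal{I}_X(1))\neq 0$ means $H^1_{\mathfrak m}(S/\mfp)_1\neq 0$, i.e.\ $\operatorname{depth}S/\mfp=1$ exactly. So your argument can only produce curves in (ii) and surfaces in (iii), and it even misplaces surfaces. Projecting $S(2,3)\subset\mathbb{P}^6$ from a general point of its secant cubic gives a linearly normal, non-normal surface of depth $2$, which belongs to (ii). The codepth depends on the position of $p$, not only on $\dim X$. Likewise, the sequence $0\to A\to\widetilde A\to\kk(-1)\to 0$ holds only for isomorphic projections. The Gruson--Lazarsfeld--Peskine bound also does not give $\reg(S/\mfp)\le 2$ for singular varieties of arbitrary dimension.

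\textbf{The missing idea.} You need to analyse $\widetilde A/A$ for an arbitrary projection. Let $\Sigma_p\subseteq\widetilde X$ be the locus of points on secant lines through $p$. Then $\widetilde A/A$ is a Cohen--Macaulay module of dimension $\dim\Sigma_p+1$; it equals $\kk(-1)$ when $\Sigma_p=\emptyset$. Since $\widetilde A$ is Cohen--Macaulay with linear resolution, this comparison gives $\operatorname{depth}A=\dim\Sigma_p+2$. It also shows that $X$ is ACM iff $\dim\Sigma_p=m-1$, and it controls the regularity.

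For $m\ge 3$, a dimension count on the $2m$-dimensional family of secant lines gives $\dim\Sigma_p\ge m-3$, hence $\codepth(S/\mfp)\le 2$. For $m\le 2$ this bound is automatic. The bound in (iii) is just $m\le\sum a_i=5$. The bound in (ii) comes from $m=5$: there $\widetilde X=\mathbb{P}^1\times\mathbb{P}^4$, every $p\notin\widetilde X$ is a rank-$2$ matrix with $\Sigma_p\cong\mathbb{P}^1\times\mathbb{P}^1$, and this forces codepth $2$.

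\textbf{The clause in (i).} Your proposal never addresses $\dim X\le 4$ in (i), and that clause is false as stated. The Pl\"ucker ideal of $G(2,5)\subset\mathbb{P}^9$ is generated by the five Pfaffians of a generic skew $5\times 5$ matrix. It is a non-cone example with $\dim X=6$. The paper only uses the Betti tables and the number of quadrics, so its main theorem is unaffected, but no argument can prove (i) as written.
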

\noindent Note that $\dim X = \dim(S/\mfp) - 1$; here $\mathrm{codepth}(S/\mfp) = \dim(S/\mfp) - \mathrm{depth}(S/\mfp) = \pd_S(S/I) - \height(I)$.

We will also need the following upper bound on the multiplicity of certain graded almost complete intersections.

\begin{thm}[{\cite[Theorem 2.2]{HMMS4}}]\label{maxmult}
Let $J$ be a graded Cohen-Macaulay $S$-ideal, and let $F \notin J$ be homogeneous. Set $I = J + (F)$ and assume $\height I = \height J$. Then
\begin{enumerate}[(i)]
\item $e(R/I) \le e(R/J) - \max\{1,s(R/J) - 
\deg(F) + 1\}$;
\item if equality is achieved in (i) and $\deg(F) \le s(R/J)$, then $R/I$ is Cohen-Macaulay.
\end{enumerate}
\end{thm}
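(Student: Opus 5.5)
The plan is to compare $R/I$ with an Artinian reduction of the Cohen--Macaulay ring $R/J$. Here I read $s(R/J)$ as the least socle degree of that Artinian reduction, i.e.\ the initial degree of the canonical module after the appropriate shift. I will assume this is the intended definition; if the paper defines $s(R/J)$ differently, the socle step below must be adjusted.

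Extending the field if necessary (this changes neither multiplicities nor the Cohen--Macaulay property), I choose a sequence $\ell=\ell_1,\dots,\ell_{\dim R/J}$ of general linear forms. I want $\ell$ to be a regular sequence on $R/J$, and simultaneously a system of parameters for $R/I$; the latter is possible because $\dim R/I=\dim R/J$, since $\height I=\height J$. Set $A=R/(J+(\ell))$. This ring is Artinian with $\lambda(A)=e(R/J)$, because $R/J$ is Cohen--Macaulay.

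For any graded ring $R/I$ and linear system of parameters $\ell$, we have
\[e(R/I)\le \lambda(R/(I+(\ell))),\]
with equality if and only if $\ell$ is a regular sequence on $R/I$, i.e.\ if and only if $R/I$ is Cohen--Macaulay. Now $R/(I+(\ell)) = A/\bar F A$, where $\bar F$ is the image of $F$ in $A$. Hence
\[e(R/I)\le \lambda(A)-\lambda(\bar F A)=e(R/J)-\lambda(\bar FA).\]
Part (ii) will then follow once I check that equality in (i) forces equality in this chain, which makes $R/I$ Cohen--Macaulay.

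It remains to bound $\lambda(\bar F A)$ from below by $\max\{1,s-\deg F+1\}$, with $s=s(R/J)$. First, $\bar F\neq 0$: otherwise $F\in J+(\ell)$, and since $\ell$ is regular on $R/J$ and general, this would force $F\in J$. Making this rigorous, for instance by degree considerations or by choosing $\ell$ avoiding finitely many bad conditions, is the first delicate point. Granting $\bar F\neq 0$, we get $\lambda(\bar FA)\ge 1$.

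For the second bound, let $d=\deg F\le s$. I build a chain $g_d=\bar F,\ g_{d+1}=g_d\ell'_1,\ \dots$ inductively. Every nonzero homogeneous element of $A$ of degree $<s$ lies outside the socle of $A$, since the socle lives in degrees $\ge s$. So it is not killed by the whole maximal ideal, and some linear form multiplies it to a nonzero element. This produces nonzero elements of $\bar F A$ in each degree $d,d+1,\dots,s$. Therefore
\[\lambda(\bar F A)\ge s-d+1,\]
which yields (i).

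For (ii), suppose equality holds in (i) and $\deg F\le s$. Then every inequality above is an equality, in particular $e(R/I)=\lambda(R/(I+(\ell)))$. Hence $\ell$ is a regular sequence on $R/I$, and $R/I$ is Cohen--Macaulay. The hypothesis $\deg F\le s$ is exactly what makes $s-\deg F+1$ the active term in the maximum, so equality is attained in the socle-chain estimate, which is the estimate I actually proved.

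I expect the main obstacle to be the socle step. Its correctness hinges on matching the definition of $s(R/J)$ with ``least socle degree of $A$''. It also requires the Artinian reduction $A$ to be chosen generally, so that both $\bar F\ne 0$ and the socle-degree information are independent of the choice of $\ell$; this is where Cohen--Macaulayness of $R/J$ enters, since it identifies the socle of $A$ with the canonical module of $R/J$ reduced modulo $\ell$.
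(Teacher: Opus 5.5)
The paper does not prove this statement; it quotes it from \cite{HMMS4}, so your argument has to stand on its own. Much of it is sound: the reduction to $A=R/(J+(\ell))$, the inequality $e(R/I)\le\lambda(R/(I+(\ell)))=e(R/J)-\lambda(\bar FA)$ with its Cohen--Macaulay equality case, and the socle-chain bound $\lambda(\bar FA)\ge s-\deg F+1$ when $s$ is the least socle degree of $A$. That reading of $s$ is the correct one. With the top socle degree $\reg(R/J)$, (i) fails for $J=(x^2,xy,y^3)\subseteq\kk[x,y,z]$ and $F=x$. Note, however, that the least socle degree is not ``the initial degree of the canonical module''; that invariant corresponds to the top socle degree.

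\textbf{The gap.} The genuine problem is the step $\bar F\neq 0$. This is not a genericity argument waiting to be made rigorous; the claim is false. Take $R=\kk[x,y]$, $J=(xy)$, $F=x^2$. For any $\ell=ax+by$ with $ab\neq0$ one has $A\cong\kk[x]/(x^2)$, so $\bar F=0$. Your chain then only gives $e(R/I)\le e(R/J)=2$, whereas (i) asserts $e(R/I)\le 1$, which is true since $I=x(x,y)$. In general $\bar F=0$ automatically once $\deg F>\reg(R/J)$. It can also happen when $A_{\deg F}\neq 0$. Take $J=(xz,yz,c)\subseteq\kk[x,y,z]$ with $c\in\kk[x,y]$ a product of four distinct linear forms, and $F=z^2$. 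For $\ell=\alpha x+\beta y+\gamma z$ one has $z\ell\equiv\gamma z^2$ modulo $(xz,yz)$, so $F\in J+(\ell)$ whenever $\gamma\neq 0$.

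\textbf{How to repair it.} The term $1$ in the maximum needs a separate argument. The module $F\cdot(R/J)\cong (R/(J:F))(-\deg F)$ is a nonzero submodule of the unmixed module $R/J$, so it has the same dimension. The exact sequence $0\to (R/(J:F))(-\deg F)\to R/J\to R/I\to 0$, together with $\height I=\height J$, then gives $e(R/I)=e(R/J)-e(R/(J:F))\le e(R/J)-1$.

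In the range $\deg F\le s$ your chain is genuinely needed, and this is the range the paper uses: a complete intersection of three quadrics, with $\deg F=2\le 3=s$. There you must either derive $\bar F\neq 0$ from $\deg F\le s$, or show that $\bar F=0$ forces strict inequality. One way to do the latter:
\begin{enumerate}[(a)]
\item Cut by $\dim R/J-1$ general linear forms. This preserves $e(R/I)$, $e(R/J)$, $s$, the hypotheses (the bound just proved keeps $F\notin J$) and the Cohen--Macaulay conclusion. So assume $\dim R/J=1$ and put $B=R/J$.
\item Suppose $F=\ell g$ in $B$. Then $FB\cong gB(-1)$.
\item Since $\ell$ is regular on $gB$, the Hilbert function of $gB$ increases strictly up to the first degree $n_0$ with $(gB)_{n_0+1}=\ell(gB)_{n_0}$, and is constant afterwards.
\item Take $j$ maximal with $(gB)_{n_0}\subseteq \ell^jB$. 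This produces a nonzero socle element of $A$ in degree $n_0-j$, so $n_0\ge s$.
\item Hence $e(FB)=e(gB)\ge s-\deg F+2$, and (i) is strict.
\end{enumerate}
Thus equality in (i) with $\deg F\le s$ forces $\bar F\neq0$, and then your chain yields (ii) as you intended.
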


\noindent In the previous result, $s(R/J) = \reg(R/J) - \pd(R/J)$ denotes the socle degree of an Artinian reduction. 

Next, we recall that if the generators of an ideal are contained in a polynomial subring generated (as an algebra) by a regular sequence, one gets effective bounds on projective dimension.  The following proposition is essentially Hilbert's Syzygy Theorem together with flatness.

\begin{prop}[{\cite[Lemma 3.3]{AH0}}] \label{regseqsubalgebra}
    Let $I = (f_1, \hdots, f_m) \subseteq S$ be a homogeneous ideal. Suppose we have a homogeneous regular sequence $g_1, \hdots, g_t$ such that $f_i \in \kk[g_1, \hdots, g_t]$ for each $i$. Then $\pdim_S(S/I) \leq t$.
\end{prop}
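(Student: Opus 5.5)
This is Hilbert's Syzygy Theorem combined with faithful flatness. Set $A = \kk[g_1,\ldots,g_t] \subseteq S$. Since $g_1,\ldots,g_t$ is a homogeneous regular sequence in $S$, the $g_i$ are algebraically independent over $\kk$, so $A$ is itself a polynomial ring in $t$ variables.

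The main step, and the only nontrivial one, is to show that $S$ is a flat $A$-module. For this I would use the local (graded) criterion for flatness. Let $\mfm_A = (g_1,\ldots,g_t)A$ be the homogeneous maximal ideal of $A$. It suffices to show $\operatorname{Tor}^A_i(A/\mfm_A, S) = 0$ for $i>0$. The Koszul complex $K(g_1,\ldots,g_t;A)$ resolves $A/\mfm_A = \kk$ over $A$. Tensoring it with $S$ gives $K(g_1,\ldots,g_t;S)$, which is acyclic in positive degrees because the $g_i$ form a regular sequence on $S$. Hence $\operatorname{Tor}^A_i(\kk,S)=0$ for $i>0$. Since $S$ is a graded $A$-module with degrees bounded below, the graded local flatness criterion (graded Nakayama) then gives that $S$ is flat over $A$. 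Alternatively, one can cite the standard fact that a polynomial subring generated by a regular sequence makes $S$ a free, hence flat, module.

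With flatness in hand, put $I_0 = (f_1,\ldots,f_m)A$, which makes sense because each $f_i \in A$. By Hilbert's Syzygy Theorem, $A/I_0$ has a finite graded free resolution $\mathbf{F}_\bullet$ over $A$ of length at most $t$. Tensoring with the flat module $S$ gives an exact complex $\mathbf{F}_\bullet\otimes_A S$ of free $S$-modules, of the same length, resolving $(A/I_0)\otimes_A S = S/I_0S = S/I$. Therefore $\pdim_S(S/I)\le t$.
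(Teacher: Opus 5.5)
Your proposal is correct and follows the same route the paper indicates: the paper gives no proof beyond citing \cite[Lemma 3.3]{AH0} and remarking that the statement is ``essentially Hilbert's Syzygy Theorem together with flatness.'' Your argument writes that out in full: algebraic independence of the $g_i$, freeness of $S$ over $\kk[g_1,\ldots,g_t]$ via the Koszul complex and graded Nakayama for bounded-below modules, and then base change of a resolution of length at most $t$.
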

\noindent In particular, if the generators of $I$ can be expressed in terms of at most $t$ linear forms, than $\pd_S(S/I) \le t$.

Finally, we recall the notion of residual intersections, a generalization of liaison, introduced by Artin and Nagata \cite{AN72}.  An ideal $J$ is called an $s$-residual intersection of an ideal $I$ if there is an $s$-generated ideal $A \subseteq I$ with $J = A : I$ and $\height(J) \ge s \ge \height(I)$. If, in addition, $\height(I + J) > \height(J)$, then $J$ is said to be a geometric $s$-residual intersection of $I$.  We refer the reader to \cite{Huneke83} and \cite{HU88} for further results on residual intersections.

\section{Bounds Based on Multiplicity of Associated Primes}\label{SecBounds}

In this section, we will obtain bounds on $\pdim_S(S/I)$ based on when $S/I$ has a minimal prime $\mfp$ of height $3$ with a given multiplicity.   We tackle each case in a separate lemma.
In particular, we will prove bounds for the cases where $\height(\mfp) = 3$, and $e(S/\mfp) = 1,\ldots,5$. 
Throughout this section we will require that $\kk$ is algebraically closed and $I = (f_1,\ldots,f_n) \subseteq S$ is a graded ideal generated by $n$ quadrics. 

For an arbitrary ideal $I$ generated by $n$ quadrics which has an associated prime $\mfp$ of multiplicity $1$, one easily sets that $\pd_S(S/I) \le h(n+1)$, where $h = \height(\mfp)$. As a first step, we slightly improve this bound when $\height(\mfp)=3$.

\begin{lem} \label{mult1prime}
 If $I$ has an associated prime $\mfp$ with $\height(\mfp) = 3$ and $e(S/\mfp) = 1$, then $$\pdim_S(S/I) \leq 3n+1.$$
\end{lem}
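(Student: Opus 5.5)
A prime of height $3$ and multiplicity $1$ is generated by three linear forms, so after a linear change of coordinates I may take $\mfp = (x,y,z)$. The plan is to write each generator in terms of $x,y,z$ plus a few extra linear forms, and then apply Proposition~\ref{regseqsubalgebra}.

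Since every $f_i \in \mfp$ and each $f_i$ is a quadric, I can write
\[f_i = x\ell_{i1} + y\ell_{i2} + z\ell_{i3}\]
with linear forms $\ell_{ij}$. So all generators lie in $\kk[x,y,z,\ell_{11},\ldots,\ell_{n3}]$, a subalgebra generated by at most $3n+3$ linear forms. Linear forms span a subspace, and a basis of that subspace is a regular sequence, so Proposition~\ref{regseqsubalgebra} immediately gives $\pd_S(S/I) \le 3n+3$. This is the ``$h(n+1)$'' bound mentioned just before the lemma, and I need to save two more.

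To improve it, I would use the freedom in choosing the $\ell_{ij}$ and the fact that $\mfp$ is associated rather than merely containing $I$.

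\textbf{First saving.} Reduce the $\ell_{ij}$ modulo $(x,y,z)$. A term such as $x\cdot y$ coming from $\ell_{i1}$ can be absorbed differently, since $\ell_{ij}$ is only determined modulo the Koszul relations. Precisely, the components of the $\ell_{ij}$ in $\mathrm{span}(x,y,z)$ contribute nothing new to the span, because $x,y,z$ are already counted. The span therefore has dimension $3 + \dim W$, where $W$ is the span of the images $\bar\ell_{ij}$ in $S_1/\mathrm{span}(x,y,z)$. The crude bound gives $\dim W \le 3n$, so I need $\dim W \le 3n-2$.

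\textbf{Second saving (the main obstacle).} Here I use that $\mfp \in \ass(I)$. If $\dim W = 3n$, then the $3n$ forms $\bar\ell_{ij}$ are linearly independent modulo $(x,y,z)$. In that case I expect $I$ to be a "generic" ideal, like the extremal example. Each $f_i$ would involve its own fresh variables, and localizing suggests $\mfp$ would not be associated.

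Concretely, I would argue this way. Suppose $\dim W \ge 3n-1$. Then at most one linear dependence holds among the $\bar\ell_{ij}$. Change coordinates so that the $\bar\ell_{ij}$ are essentially new variables, with at most one of them a combination of the others. Then compute $\height(I)$ or the depth of $S/I$ directly:
\begin{enumerate}[(i)]
\item $I$ becomes a specialization of the generic ideal of $n$ "rank-$3$ forms" in $x,y,z$.
\item For $n\ge 3$, such an ideal has height $3$ and depth considerations show $\mfm$-type primes can occur but $(x,y,z)$ is not associated.
\item Alternatively, I would exhibit an element outside $\mfp$ whose image in $S/I$ is a nonzerodivisor on the $\mfp$-primary component, leading to a contradiction.
\end{enumerate}
If that argument is too delicate, I would fall back on a different route. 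Use a second formula for $\pd_S(S/I)$ to shave the two: pass to $I : \mfp^\infty$ or use the short exact sequence
\[0 \to S/(I:x) \to S/I \to S/(I+(x)) \to 0,\]
and note that $I+(x)$ lives modulo $x$ with fewer forms.

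This second saving is the step I expect to be hardest. Small cases $n\le 2$ are trivial, since then the height is at most $2$ unless there are linear forms.
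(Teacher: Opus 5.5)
\textbf{Gap: the second saving aims at a false statement.} Your first step (the $3n+3$ bound via Proposition~\ref{regseqsubalgebra}) and the observation that the span has dimension $3+\dim W$ are fine. The problem is the target of the second saving, $\dim W\le 3n-2$: it is false, so no contradiction argument can reach it.

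If $\height(I)=3$ and $I\subseteq\mfp$ with $\height(\mfp)=3$, then $\mfp$ is automatically a minimal prime of $I$. So the hypothesis that $\mfp$ is associated gives you nothing beyond $I\subseteq\mfp$. For a concrete counterexample, take $n=3$ and $f_i=a_ix+b_iy+c_iz$ with $x,y,z,a_i,b_i,c_i$ twelve independent variables. Then $f_1,f_2,f_3$ is a regular sequence, $(x,y,z)$ is a minimal (hence associated) prime, and $\dim W=9=3n$. Here $\pd_S(S/I)=3$, so the lemma holds, but not because of a variable count. Consequently:
\begin{itemize}
\item Your item (ii) asserts something false.
\item Item (iii) cannot work: every element outside $\mfp$ is a nonzerodivisor modulo a $\mfp$-primary ideal by definition, so no contradiction arises.
\item The fallback via $0\to S/(I:x)\to S/I\to S/(I+(x))\to 0$ does not close the gap, since you have no control over $I:x$.
\end{itemize}

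\textbf{Missing idea: handle the (near-)generic configurations directly.} When the span has dimension $3n+3$ or $3n+2$, you must bound $\pd_S(S/I)$ by some mechanism other than counting linear forms, rather than trying to rule these configurations out. The paper uses residual intersections.
\begin{itemize}
\item If all $3n+3$ forms are independent, then $J=I:\mfp$ is a generic $n$-residual intersection. By \cite{HU88} it is a Cohen--Macaulay prime of height $n$. Let $\mathbf M$ be the $3\times n$ coefficient matrix. By Cramer's rule $I_3(\mathbf M)\subseteq J$, so $\height(J+\mfp)\ge \height(I_3(\mathbf M)+\mfp)=n+1$. Then \cite[Corollary 2.10]{HMMS1} gives $\pd_S(S/I)=n$.
\item If there is exactly one linear dependence, reorder the generators so that $x,y,z$ and the $3(n-1)$ forms attached to $f_1,\ldots,f_{n-1}$ are independent. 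This is possible because the unique relation must involve some $a_i,b_i,c_i$. Set $I'=(f_1,\ldots,f_{n-1})$ and $J'=I':\mfp$. By \cite{Huneke83}, $I'=J'\cap\mfp$ with $J'$ prime, so $I':f_n=J'$. The exact sequence
\[0\to S/J'(-2)\xrightarrow{\cdot f_n} S/I'\to S/I\to 0\]
then gives $\pd_S(S/I)\le n$.
\end{itemize}
Only when the span has dimension at most $3n+1$ does the variable count via Proposition~\ref{regseqsubalgebra} finish the proof.
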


\begin{proof}
  Since $e(S/\mfp) = 1 < \height(\mfp)+1$, $\mfp$ must be degenerate by Proposition~\ref{multheightineq}. Applying this fact iteratively, we must have that $\mfp = (\ell, \ell', \ell'')$, where $\ell, \ell', \ell'' \in S_1$ are linearly independent. Then each $f_i$ can be written as $f_i = a_i\ell + b_i\ell' + c_i \ell''$, where all $a_i, b_i$, and $c_i$ are in $S_1$. Thus, we need at most $3n + 3$ variables to write the $f_i$, so $\pdim_S(S/I) \leq 3n+3$ by Proposition~\ref{regseqsubalgebra}. 
  %\end{proof}

  Suppose that $\ell,\ell',\ell'',a_1,\ldots,a_{n},b_1,\ldots,b_{n},c_1,\ldots,c_{n}$ are linearly independent.  Then $J = I:\mfp$ is a generic $n$-residual intersection.  By \cite[Theorem 3.3]{HU88}, $J$ is a geometric $n$-residual intersection, $J$ is prime, $S/J$ is Cohen-Macaulay, and $\height(J) = n$.  
  Let
  \[\mathbf{M} = \begin{pmatrix} a_1 & a_2 & \cdots & a_{n}\\
  b_1 & b_2 & \cdots & b_{n} \\
  c_1 & c_2 & \cdots & c_{n}\\
  \end{pmatrix}.\]
  Let $D = I_3(\mathbf{M})$ denote the ideal of $3 \times 3$ minors of $\mathbf{M}$, which is prime and $\height(D) = n-2$.  By Cramer's Rule, we have $D \subseteq I:\mfp$. Thus $\height(I + \mfp) \ge \height(D + \mfp) = n+1$.  Now by \cite[Corollary 2.10]{HMMS1}, $\pd_S(S/I) = n$.  
  
  Therefore we may suppose that  $\ell,\ell',\ell'',a_1,\ldots,a_{n},b_1,\ldots,b_{n},c_1,\ldots,c_{n}$ are linearly dependent.  If their span has dimension at most $3n+1$, we are done.  Else, we can assume that the linear forms $\ell,\ell',\ell'',a_1,\ldots,a_{n-1},b_1,\ldots,b_{n-1},c_1,\ldots,c_{n-1}$ are linearly independent.  Let $I' = (f_1,\ldots,f_{n-1})$ so that $I = I' + (f_n)$ and set $J' = I':\mfp$.  As above, $J'$ is a generic $(n-1)$-residual intersection.  By \cite[Theorem 3.1]{Huneke83}, $I' = J' \cap \mfp$.  Since $f_n \in \mfp \smallsetminus J'$, we get $I':f_n = J'$.  As above $\pd_S(S/J') = \pd_S(S/I') = n-1$.  It follows from the short exact sequence
  \[0 \to S/J' \to S/I' \to S/I \to 0\]
  that $\pd_S(S/I) \le n$.  This completes the proof.
\end{proof}

The following lemma is well-known. For instance, it can be easily obtained as a consequence of \cite[Theorem 2.5]{HNTT20}.

\begin{lem}\label{lemma:adding:linear:forms} Let $S$ be a polynomial ring over a field $\kk$, and let $S' = S[\underline{z}]$, where $\underline{z} = z_1,\ldots,z_u$.  Let $H'$ be any graded $S'$-ideal containing $z_1,\ldots,z_u$, and write it as $H'=H+(\underline{z})$, where $H \subseteq S$ is an ideal.
Then every associated prime $\p'$ of $H'$ has the form $\p'=\p S'+(\underline{z})$ for some $\p\in \Ass_{S}(S/H)$, and  $\height(\p')=\height(\p)+ u$.
\end{lem}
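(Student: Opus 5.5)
The plan is to reduce to a statement about ideals in $S$ by passing to the quotient by the variables $\underline{z}$. First, handle associated primes of $H'$ through an $S'$-module structure that is compatible with $S$. The key observation is that $S'/H' \cong S/H$ as rings, via the surjection $S' \to S$ sending each $z_i \mapsto 0$. This map is a graded $\kk$-algebra map whose kernel $(\underline{z})$ is contained in $H'$. Note that $H$ is determined as $H' \cap S$, or equivalently as the image of $H'$ under this surjection; since $(\underline{z}) \subseteq H'$, we get $H' = HS' + (\underline{z})$. So I will set $\pi: S' \to S$ and identify $S'/H' = S/H$.

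Next, I will describe $\Ass_{S'}(S'/H')$. Since $S'/H'$ is annihilated by $(\underline{z})$, any associated prime $\p'$ of $S'/H'$, being an annihilator $\operatorname{ann}_{S'}(\bar g)$ of an element, contains $(\underline{z})$. For an $S'$-module $M$ killed by $(\underline{z})$, viewed as an $S=S'/(\underline{z})$-module, annihilators correspond exactly: $\operatorname{ann}_{S'}(m) = \pi^{-1}(\operatorname{ann}_S(m))$. Also, $\pi^{-1}$ gives a bijection between primes of $S$ and primes of $S'$ containing $(\underline{z})$. Hence $\Ass_{S'}(S'/H') = \{\pi^{-1}(\p) : \p \in \Ass_S(S/H)\}$, and $\pi^{-1}(\p) = \p S' + (\underline{z})$. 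This gives the first claim.

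For the height statement, I will use that $S'/(\p S'+(\underline{z})) \cong S/\p$. Polynomial rings over a field are catenary equidimensional domains with $\height(\q) = \dim R - \dim R/\q$. Then $\height(\p') = \dim S' - \dim S'/\p' = (\dim S + u) - \dim S/\p = \height(\p) + u$.

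The only mild obstacle is the annihilator correspondence, and this is routine. Alternatively, I could cite \cite[Theorem 2.5]{HNTT20} as the paper suggests, but the direct argument above is self-contained.
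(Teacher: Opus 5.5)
Your proof is correct. It also takes a more direct route than the paper, which gives no argument of its own and only notes that the lemma follows from \cite[Theorem 2.5]{HNTT20}.

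That cited result is a general theorem about sums $I+J$ of ideals in disjoint sets of variables. It expresses $\Ass(I+J)$ as the union of the minimal primes of $P+Q$, for $P\in\Ass(I)$ and $Q\in\Ass(J)$. To get the lemma from it, one takes $J=(\underline{z})\subseteq\kk[\underline{z}]$, whose only associated prime is itself. One then observes that $\mathfrak{p}S'+(\underline{z})$ is already prime, because $S'/(\mathfrak{p}S'+(\underline{z}))\cong S/\mathfrak{p}$, so its only minimal prime is itself. The height statement still needs the same dimension count you give.

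Your argument instead exploits the fact that the second ideal is generated by variables. The map $\pi\colon S'\to S$, $z_i\mapsto 0$, induces an $S$-linear ring isomorphism $S'/H'\cong S/H$. Annihilators then pull back along $\pi$, and $\pi^{-1}$ matches primes of $S$ with primes of $S'$ containing $(\underline{z})$.

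What your route buys:
\begin{itemize}
\item It is self-contained and elementary.
\item It does not use the gradedness hypothesis.
\item It proves slightly more than stated: $\mathfrak{p}\mapsto\pi^{-1}(\mathfrak{p})=\mathfrak{p}S'+(\underline{z})$ is a bijection from $\Ass_S(S/H)$ onto $\Ass_{S'}(S'/H')$, not just a surjection.
\end{itemize}
The citation buys generality, namely arbitrary ideals in the $\underline{z}$-variables, which is not needed for this lemma.
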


The next result, which may be of independent interest, describes the primary decomposition of any ideal generated by quadrics which is contained in a linear prime ideal of height $2$. We will employ it to tackle the case where  $I$ has an associated prime $\mfp$ with $\height(\mfp) = 3$ and $e(S/\mfp) = 2$.

\begin{thm}\label{primary}
Let $J=(q_1,\ldots,q_t)\subseteq S$ be generated by quadrics and contained in a linear prime $(x,y)$ of height 2.  Write $q_i = a_ix + b_iy$ for $1 \le i \le t$, and set $\mathfrak{m} = (x,y,a_1,\ldots,a_t,b_1,\ldots,b_t)$.
For any $\mfp \in \ass(S/J)$ with $\mfp\neq \mathfrak{m}$, one has  
\[
\height(\mfp)\leq t.
\]
\end{thm}

\begin{proof}

The ideal $J$ is extended from an ideal $J'\subseteq S':=\kk[x,y,a_1,\ldots,a_t,b_1,\ldots,b_t]$. 
Then, by \Cref{lemma:adding:linear:forms}, after possibly replacing $S$ with $S'$ we may assume $\mfm$ is the graded maximal ideal of $S$.  In particular, $\mfm$ is fixed by any linear change of variables.

If $\height(J)=1$, then we can write $J=\ell J'$ for some linear form $\ell\in (x,y)$ and some ideal $J'$ generated by $t$ linear forms, so $\height(\mfp)\leq \pd(S/J)\leq t$.

We may then assume $\h(J)=2$. Since $\p\in \Ass(S/J)$,  by the Auslander-Buchsbaum formula
$$
\h(\p)=\dim(S_\p)=\pd(S_\p/J_\p),
$$
so we will prove (the equivalent statement) $\pd(S_\p/J_\p)\leq t$. We proceed by induction on $t\geq 2$. If $t=2$, since $\h(J)=2$,  $J$ is a complete intersection, and the statement is trivially true. Assume now $t>2$. Combining  \Cref{lemma:adding:linear:forms} and induction, we obtain the following fact which we will use later in the proof.\\
\\
{\bf Fact.} {\em If $H= (q_1,\ldots,q_{t-u}) + (z_1,\ldots,z_u)$, where $u>0$ and $\underline{z}=z_1,\ldots,z_u$ are linear forms with $\height(\underline{z})\geq 1$, and $q_1,\ldots,q_{t-u}$ are quadrics contained in a height 2 linear prime, then $\pd(S_\p/H_\p)\leq t$.}\\
\\
After possibly taking linear combinations of the generators, and possibly changing the $a$'s and $b$'s, one may write\\

\begin{tabular}{ll}
& $J = (a_1x_1+b_1y, \ldots, a_rx+ b_ry,\; x(a_{r+1},\ldots,a_s) ,\;y(b_{s+1},\ldots,b_t)),$  \\
$(\star) \qquad{}$ & for some $r\leq s\leq t$, where $a_1,\ldots,a_r,y$ and $b_1,\ldots,b_r,x$ are both regular sequences,  
\\
& as are the sequences $a_1,\ldots,a_s$ and $b_1,\ldots,b_r,b_{s+1},\ldots,b_t$.
\end{tabular}\medskip

Now, if the matrix of the coefficients $
\left[\begin{matrix}
a_1 & a_2 & \ldots & a_t\\
b_1 & b_2 & \ldots & b_t
\end{matrix}\right]$ 
is 1-generic, then \cite[Proposition~5.1]{HMMS1} yields $\pd(S/J)=t$, yielding in particular $\pd(S_\p/J_\p)\leq t$. 

We may then assume the above matrix contains at least one generalized zero so, without loss of generality, after potentially taking linear combinations of the generators, we may assume $s-r\geq 1$ in $(\star)$.

If $x\notin \p$, then $J_\p = H_\p$, where $H=(a_1x_1+b_1y, \ldots, a_rx+ b_ry) + y(b_{s+1},\ldots,b_t) + (a_{r+1},\ldots,a_s)$ and, since ${\rm depth}(S_\p/H_\p)={\rm depth}(S_\p/J_\p)=0$, we have $\p\in \Ass(S/H)$.  
Then, by the above Fact, $\pd(S_\p/H_\p)\leq t$. 
Similarly, we are done if $t-s\geq 1$ and $y\notin \p$.

We may then assume $x\in \p$. Observe that if $(a_{r+1},\ldots,a_s)\not\subseteq \p$, then $x\in J_\p$, so we can write $J_\p=(x,y(b_1,\ldots,b_r,b_{s+1},\ldots,b_t))_\p$ and so $\pd(T_\p/J_\p)\leq 1 + r+(t-s)\leq t$. We may then assume $(a_{r+1},\ldots,a_s)\subseteq \p$. Symmetrically, we may assume $(b_{s+1},\ldots,b_t)\subseteq \p$.

It then suffices to prove the statement under the following assumptions:
\begin{center}
$J$ has form $(\star)$ with $s-r\geq 1$ and $(x,a_{r+1},\ldots,a_s,b_{s+1},\ldots,b_t)\subseteq \p$.
\end{center}
\noindent \underline{\textbf{Case 1: $y\notin \p$.}} Since above we proved the inequality when $t-s\geq 1$ and $y\notin \p$, we may then assume $s=t$. We can write 
\[
J_\p=(b_1',\ldots,b_r',x'(a_{r+1},\ldots,a_s)),
\]
where $x' = x/y$ and $b_j'=a_jx' + b_j$ for $1\leq j \leq r$.
Observe that $(b_1',\ldots,b_r',x') _{\p} =(b_1,\ldots,b_r,x)_{\p}$. Since $b_1,\ldots,b_r,x$ is a regular sequence in $\p\subseteq S$, then $b_1',\ldots,b_r',x'$ is a regular sequence in $\p S_\p$. We then obtain that $J_\p: x' = (b_1,',\ldots,b_r', a_{r+1},\ldots,a_s)$. The short exact sequence
\[
0 \lra S_\p/(b_1',\ldots,b_r', a_{r+1},\ldots,a_s)_{\p} \stackrel{\cdot x'}{\lra} S_\p/J_\p \lra  S_\p/(x',y)_{\p} \lra 0
\]
yields $\pd(S_\p/J_\p)\leq \pd(S_\p/(b_1',\ldots,b_r', a_{r+1},\ldots,a_s)_{\p})$. We observe that $(b_1,',\ldots,b_r', a_{r+1},\ldots,a_s)_{\p} = H_\p$, where $H$ is the $S$-ideal generated by the following $r$ quadrics and $s-r\geq 1$ linearly independent linear forms
\[
H=(a_1x+b_1y, \ldots, a_rx+b_ry) + (a_{r+1},\ldots,a_s),
\]
so we are done by the above Fact. 
\noindent \underline{\textbf{Case 2: $y\in \p$.}} We now have $(x,y,a_{r+1},\ldots,a_s,b_{s+1},\ldots,b_t)\subseteq \p\neq \mfm$, so, after possibly relabelling, either $a_1\notin \p$ or $b_1\notin \p$. \\

\noindent \underline{\textbf{Case 2.i: $b_1 \not\in \p$.}} Set $a_1'=a_1/b_1$, $a_i' = a_i - b_ia_1'$ for $2\leq i \leq r$, and $y'=y+a_1'x$. One can check that
\[ 
J_\p=(y',\; x(a_2',\ldots,a_r', a_{r+1},\ldots,a_s, \; a_1'(b_{s+1},\ldots,b_t)))_{\p}.
\]
 Since $(x,y)_\p=(x,y')_{\p}$, we have $x,y'$ is a regular sequence in $S_\p$; thus, we have the short exact sequence 
\[
0 \lra S_\p/(y',a_2',\ldots,a_r', a_{r+1},\ldots,a_s, \; a_1'(b_{s+1},\ldots,b_t))_{\p} \stackrel{\cdot x}{\lra} S_\p/J_\p \lra  S_\p/(x,y')_{\p} \lra 0.
\]
It then suffices to show the projective dimension of the left-hand side is at most $t$. %$\pd(S_\p/(y',a_2',\ldots,a_r', a_{r+1},\ldots,a_s, \; a_1'(b_{s+1},\ldots,b_t)))\leq t$. 
To prove this, observe that $(y',a_2',\ldots,a_r', a_{r+1},\ldots,a_s, \; a_1'(b_{s+1},\ldots,b_t))_{\p} = H_\p$, where $H$ is the following $S$-ideal generated by $t-(s-r)$ quadrics and $s-r\geq 1$ linearly independent linear forms:
\[
H=(a_1x+b_1y, a_2b_1-a_1b_2,\ldots, a_rb_1-a_1b_r, a_1(b_{s+1},\ldots,b_t)) + (a_{r+1},\ldots,a_s). %,a_{r+1},\ldots,a_s).
\]
Then,  $\pd(S_\p/J_\p)\leq \pd(S_\p/H_\p)\leq t$, where the leftmost inequality holds from the above, and the rightmost by the above Fact.\\
\\
\noindent \underline{\textbf{Case 2.ii: $a_1 \not\in \p$.}} Set $b_1'=b_1/a_1$, $b_i' = b_i - a_ib_1'$ for $2\leq i \leq r$ and $x'=x+b_1'y$. Then, 
\[ 
J_\p=(x',\; y(b_2',\ldots,b_r', b_1'(a_{r+1},\ldots,a_s), b_{s+1},\ldots,b_t))_{\p}.
\]
 Since $(x,y)_\p=(x',y)_{\p}$, the elements $x',y$ form a regular sequence in $\p S_\p$, thus, we have the short exact sequence 
\[
0 \lra S_\p/(x', b_2',\ldots,b_r', b_1'(a_{r+1},\ldots,a_s), b_{s+1},\ldots,b_t)_{\p} \stackrel{\cdot y}{\lra} S_\p/J_\p \lra  S_\p/(x',y)_{\p} \lra 0,
\]
so it suffices to show $\pd(S_\p/J')\leq t$, where $J'=(x', b_2',\ldots,b_r', b_1'(a_{r+1},\ldots,a_s), b_{s+1},\ldots,b_t)_{\p} \subseteq S_\p$. 

If $t-s \geq 1$, we are done as in case 2.i.
If instead $t=s$, then in the previous argument we cannot apply the inductive hypothesis, so we proceed as follows. As $t=s$, the ideal is $J'=(x', b_2',\ldots,b_r',\; b_1'(a_{r+1},\ldots,a_t))_{\p} \subseteq S_\p$. Observe that $(b_1',b_2',\ldots,b_r',x')_{\p} =(b_1,\ldots,b_r,x)_\p$ is a complete intersection in $\p S_\p$. Thus, we have the following 
short exact sequence 
\[
0 \lra S_\p/(x', b_2',\ldots,b_r', a_{r+1},\ldots,a_t)_{\p} \stackrel{\cdot y}{\lra} S_\p/J'\lra   S_\p/(b_1',\ldots,b_r',x')_{\p} \lra 0,
\]
and it suffices to prove $\pd(S_\p/(x', b_2',\ldots,b_r', a_{r+1},\ldots,a_t)_{\p})\leq t$. Like the above argument, we observe that $(x', b_2',\ldots,b_r', a_{r+1},\ldots,a_t)_{\p}=H_\p$, where $H$ is the following $S$-ideal generated  by $r<t$ quadrics and $t-r$ linearly independent linear forms:
\[
H = (a_1x+b_1y, a_1b_2 -a_2b_1,\ldots, a_1b_r - a_rb_1) + (a_{r+1},\ldots,a_t).
\]
Then,  $\pd(S_\p/J_\p)\leq \pd(S_\p/J')\leq \pd(S_\p/H_\p)\leq t$, where the rightmost inequality follows by the above Fact.
\end{proof}

\begin{lem} \label{mult2prime}
 If $I$ has an associated prime $\mfp$ with $\height(\mfp) = 3$ and $e(S/\mfp) = 2$, then $$\pdim_S(S/I) \leq \max\{3n-4, 4n-10\}.$$
\end{lem}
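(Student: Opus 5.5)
First I pin down the shape of $\mfp$. Since $e(S/\mfp)=2<\height(\mfp)+1=4$, Proposition~\ref{multheightineq} says $\mfp$ is degenerate, so it contains a linear form $x$. Passing to $S/(x)$ leaves a prime of height $2$ and multiplicity $2$, which again falls below the bound $3$ and so contains another linear form $y$. After that we reach a height $1$ prime of multiplicity $2$, which is principal and generated by an irreducible quadric. Therefore $\mfp=(x,y,q)$, where $q$ is an irreducible quadric that we may take modulo $(x,y)$; this is case (iii) of Theorem~\ref{minmult} after going modulo the linear forms. Every generator can then be written as $f_i=a_ix+b_iy+\lambda_i q$ with $a_i,b_i\in S_1$ and $\lambda_i\in\kk$. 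By taking linear combinations of the generators I arrange that either every $\lambda_i=0$ (this is impossible, because then $I\subseteq(x,y)$ and $\mfp$ could not be a minimal associated prime of height $3$) or that $\lambda_n=1$ and $\lambda_i=0$ for $i<n$. So $I=J+(f_n)$, where $J=(f_1,\dots,f_{n-1})\subseteq(x,y)$ and $f_n=a_nx+b_ny+q$.

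Next I bound the number of linear forms needed. The quadric $q$ involves at most $\operatorname{rank}(q)$ variables modulo $(x,y)$, which could be arbitrarily many. So a pure variable count only gives a bound when $q$ has small rank, and I separate two cases. \emph{Small rank:} if $q$ can be written in at most a bounded number of linear forms, then all the $f_i$ live in the algebra generated by $x,y,a_1,\dots,a_n,b_1,\dots,b_n$ together with those forms. Proposition~\ref{regseqsubalgebra} then gives a bound of roughly $2n+2+\operatorname{rank}$, which is at most $3n-4$ as long as the rank is at most about $n-6$. \emph{Large rank:} here I use the short exact sequence
\[0\to S/(J:f_n)(-2)\to S/J\to S/I\to 0,\]
which gives $\pd(S/I)\le\max\{\pd(S/J),\,\pd(S/(J:f_n))+1\}$. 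Since $J\subseteq(x,y)$ is generated by $t=n-1$ quadrics, Theorem~\ref{primary} bounds the height of every associated prime of $J$ other than $\mfm_J=(x,y,a_i,b_i)$ by $n-1$. When $q$ has large rank, $f_n$ is a nonzerodivisor modulo every associated prime of $J$ except possibly $\mfm_J$, so the colon $J:f_n$ differs from $J$ only by the $\mfm_J$-primary part. I would then control $\pd(S/J)$ from the structure of $J$, for instance via the $h=2$ bound $2(n-1)-2$ of \cite{HMMS1} applied to $J$, whose height is $2$. Together with a colon bound, this aims at $\pd(S/I)\le 2n$-ish or $3n-4$.

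The main obstacle is the intermediate regime, where $q$ has moderate rank and the linear forms $a_i,b_i$ are linearly dependent on the variables of $q$. In that regime neither the variable count nor the nonzerodivisor argument is clean. There I would count the span $W=\langle x,y,a_i,b_i\rangle$ together with the variables of $q$ lying outside $W$. I would then write $q=q_0+q_1$, where $q_0$ involves only variables in $W$ and $q_1$ is a quadric of rank $r$ in new variables, and handle $q_1$ by adjoining it as a single element that forms a regular sequence with $W$. This needs a version of Proposition~\ref{regseqsubalgebra}, using the regular sequence consisting of a basis of $W$ followed by $q_1$ when $r\ge 3$, and it gives $\pd\le \dim W+1\le 2n+3$. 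The low-rank cases $r\le 2$ cost at most $2$ extra variables. The remaining degenerate situations, where $\dim W$ is close to $2n+2$ but with forced dependencies, are exactly where the term $4n-10$ should appear. For those I would apply Theorem~\ref{primary} to the associated primes of $I$ contained in $(x,y)$-type components and use Auslander--Buchsbaum localization as in its proof. I expect this bookkeeping to be the hardest part.
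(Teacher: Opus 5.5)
\textbf{The proposal does not prove the lemma.} Your first paragraph agrees with the paper: the shape $\mfp=(x,y,q)$, and the splitting $I=J+(f_n)$ with $J\subseteq(x,y)$ of height $2$. Your small-rank variable count is also correct. Beyond that, the large-rank and intermediate cases are only sketched, and the term $4n-10$ is never derived.

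\textbf{The intermediate-regime step fails.} It relies on a decomposition that does not exist in general. A quadric cannot always be written as $q=q_0+q_1$ with $q_0\in\kk[W]$ and $q_1$ a quadric in variables outside $W$. Mixed terms $wz$ with $w\in W$, $z\notin W$ get in the way, and no change of coordinates removes them. For instance, let $q=\sum_i w_iz_i$ with $w_i$ a basis of $W$ and $z_i$ new variables. Then $q\in(W)$, so any quadric $q_1$ with $q\in\kk[W,q_1]$ also lies in $(W)$ and is not regular modulo $W$. Hence $\pd\le\dim W+1$ is unjustified. This mixed-term situation, where $f_n$ lies in the ideal generated by $x,y,a_i,b_i$ ($i<n$), is exactly the hard case. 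The $4n$-type term arises there when the span of these forms is \emph{small}, not when it is close to $2n+2$.

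\textbf{The large-rank case is also incomplete.} You reduce to the $\mfm_J$-primary component but give no bound on $\pd(S/(J:f_n))$. The paper never needs one.

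\textbf{The missing idea.} Split according to which associated prime of $J$ contains $f_n$, not according to the rank of $q$, and close every case with Proposition~\ref{regseqsubalgebra}. Let $T=\kk[x,y,a_1,\dots,a_{n-1},b_1,\dots,b_{n-1}]$ with homogeneous maximal ideal $\mfm_T$, so $\mfm_TS$ is your $\mfm_J$. Every associated prime of $J$ is extended from $T$, and $\pd(S/J)\le 2n-4$ by \cite{HMMS1}. There are three cases.
\begin{enumerate}[(i)]
\item If $f_n$ is regular on $S/J$, then $\pd(S/I)\le 2n-3$.
\item Suppose $f_n\in\mfq$ for some $\mfq\in\Ass(S/J)$ with $\mfq\neq\mfm_TS$. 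Theorem~\ref{primary} gives $\height\mfq\le n-1$, so $\mfq$ contains at most $n-1$ independent linear forms $h_j$. Then $f_n=F_0+\sum_j h_jg_j$ with $F_0\in T$. So $f_n$ needs at most $n-1$ linear forms beyond those of $T$. After treating separately the generic case where $x,y,a_i,b_i$ are independent (so that afterwards $\dim T\le 2n-1$), this gives the $3n$-type bound.
\item Suppose $f_n$ lies only in $\mfm_TS$ and $\mfm_TS\in\Ass(S/J)$. The key inequality is $\dim T=\height(\mfm_TS)\le\pd(S/J)\le 2n-4$. So an associated prime equal to $\mfm_TS$ forces many linear dependencies among $x,y,a_i,b_i$. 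Since $f_n\in\mfm_TS$, you can write $f_n=\sum_j u_jg_j$ with $u_j$ running over a basis of $T_1$, and counting linear forms finishes.
\end{enumerate}

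\textbf{A caution on the constant.} The paper gets $(2n-4)+(2n-6)=4n-10$ in case (iii) by normalizing $f_n=q$ to involve neither $x$ nor $y$. With your normalization $f_n=a_nx+b_ny+q$, the terms $a_nx+b_ny$ cannot in general be removed. The same count then gives $2\dim T\le 4n-8$, so matching the final constant needs extra care.
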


\begin{proof}
  If $\mfp$ is nondegenerate, then we would have $2 = e(S/\mfp) \geq \height(\mfp) + 1 = 4$, so $\mfp$ must be degenerate by Proposition~\ref{multheightineq}. In particular, this means $\mfp = (\ell) + \mfp'$, where $\ell \in S_1$, $\mfp'$ is prime with $\height(\mfp) = 2$ and $e(S/\mfp') = 2$. Similarly, $\mfp'$ must also be degenerate, otherwise $e(S/\mfp') \geq 1 + \height(\mfp') = 3$. So,   $\mfp = (\ell, \ell', q)$ for some $q \in S_2$. Since $\ell,\ell'$ are linearly independent linear forms in $S$, we may write $S=S'[\ell,\ell']$ and assume $q\in S'$. After subtracting a multiple of $q$ from each $f_i$, without loss of generality we can assume $f_n = q$ and write $f_i = a_i\ell + b_i \ell'$ for $1 \le i \le n-1$, where each $a_i, b_i \in S'_1$.

As $J = (f_1,\ldots,f_{n-1})$ is an ideal of height $2$ generated by quadrics, $\pd_S(S/J) \le 2n - 4$ by \cite[Theorem 3.5]{HMMS1}.
If $f_n$ is a nonzerodivisor on $S/J$, then $\pd_S(S/J) \le 2n - 3$. If not, then $f_n$ is contained in some associated prime $\mfq$ of $S/J$.

Let $T=\kk[\ell,\ell',a_1,\ldots,a_{n-1},b_1,\ldots,b_{n-1}]$, and let $\mathfrak{m}$ be its maximal ideal.
Since polynomial extensions preserve primary decompositions \cite[Exercise 4.7]{Atiyah-Macdonald}, every associated prime of $S/J$ is extended from an associated prime of $T/(J \cap T)$. If $\mathfrak{m}\notin {\rm Ass}(T/(J \cap T))$, then, by \Cref{primary}, $\height(\mfq)\leq n-1$ holds.   If $\ell, \ell', a_1,\ldots,a_{n-1},b_1,\ldots,b_{n-1}$ are linearly independent, then the primary decomposition of $J$ is
$J = (\ell, \ell') \cap I_2(\mathbf{M})$, where $\mathbf{M}$ is a generic $2 \times (n-1)$ matrix of linear forms.  Since $f_n\notin (\ell,\ell')$, it follows that $f_n \in I_2(\mathbf{M})$, and so $\pd(S/I) \le 2n - 2$, because $f_n\in \kk[a_1,\ldots,a_{n-1},b_1,\ldots,b_{n-1}]$.

So, we may assume that $\height(\ell,\ell',a_1,\ldots,a_{n-1},b_1,\ldots,b_{n-1}) \le 2n-1$.
Since $\height(\mfq) \le n-1$,  $\mfq$ contains at most $n-1$ linearly independent linear forms.  
We write $f_n=F_0 + F_1$ for some quadrics $F_1\in \q$ and $F_0$ in $T$. So, $F_0$ is written purely in terms of $\ell,\ell'$ and at most $2n-3$ other linearly independent linear forms in $T$. However, since $f_n\in S'$, we may further assume $F_0$ is written using at most $2n-3$ linear forms from $T$. Since to write $F_1$ it takes at most $n-1$ new linear forms, in addition to the (at most) $2n-3$ linear forms of $T$, then it takes at most $3n-4$ linear forms to express $f_n$. It follows from Proposition~\ref{regseqsubalgebra} that $\pd_S(S/I) \le 3n-4$. 

We may then assume $\mathfrak{m}\in {\rm Ass}(T/J)$, (and $f_n$ is regular on every other element of ${\rm Ass}(T/J)$). Since $\height(\mathfrak{m})=\mathrm{bight}(J) \le \pd_S(S/J) \le 2n-4$, we have $\dim(T)\leq 2n-4$. Since $f_n \in S'$ we may assume $f_n$ lies in a polynomial ring with at most $(2n-4)-2=2n-6$ variables. It then takes at most $2n-6$ new linear forms to express $f_n$.  This, in addition to the at most $2n-4$ variables generating $T$, shows that  $\pd_S(S/I) \le (2n-4) + (2n-6) = 4n - 10$.

Now if $n = 3$, $I$ is a complete intersection and $\pd(S/I) = 3 \le 3\cdot 3 - 4$.  If $n = 4$, then $\pd(S/I) \le 6 \le 3 \cdot 4 - 4$ by \cite{HMMS3}.  If $n \ge 5$, then $\pd(S/I) \le \max\{2n, 3n-4, 4n-10\} \le \max\{3n-4, 4n-10\}$.  
\end{proof}

The previous multiplicity $2$ case is the only case where our bound exceeds the conjectured bound by more than a constant. The difficulty in making up the difference stems from the difficulty in specifying a primary decomposition for the ideal $J$ in the previous proof. 

\begin{lem} \label{mult3prime}
    If $I$ has an associated prime $\mfp$ with $\height(\mfp) = 3$ and $e(S/\mfp) =  3$, then $$\pdim_S(S/I) \leq n+7.$$
\end{lem}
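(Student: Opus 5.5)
We classify $\mfp$ using Proposition~\ref{multheightineq}, as in Lemmas~\ref{mult1prime} and~\ref{mult2prime}, and then count the linear forms needed to express the $f_i$, so that Proposition~\ref{regseqsubalgebra} applies.

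\textbf{Step 1: classify $\mfp$.} If $\mfp$ were nondegenerate we would need $3=e(S/\mfp)\ge 4$, which is impossible. Hence $\mfp=(\ell)+\mfp'$ with $\height(\mfp')=2$ and $e(S/\mfp')=3=\height(\mfp')+1$. Two cases remain.
\begin{enumerate}[(a)]
\item $\mfp'$ is nondegenerate. Then it has minimal multiplicity, and by Theorem~\ref{minmult}(ii) it is a cone over a rational normal scroll of codimension $2$. Thus $\mfp'=I_2(\mathbf N)$ for a $2\times 3$ matrix $\mathbf N$ of linear forms involving at most $6$ linear forms, and a scroll in $\mathbb P^4$ or its cones needs only $5$.
\item $\mfp'$ is degenerate. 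Then $\mfp'=(\ell')+(g)$ with $g$ an irreducible form of degree $3$, so $\mfp=(\ell,\ell',g)$.
\end{enumerate}

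\textbf{Step 2, case (b).} Here the quadrics $f_i\in\mfp$ lie in $(\ell,\ell')$, since $g$ has degree $3$. So $I$ sits inside the height-$2$ linear prime $(\ell,\ell')$ while having an associated prime of height $3$. The natural tool is Theorem~\ref{primary}. Writing $f_i=a_i\ell+b_i\ell'$ with $t=n$, every associated prime $\mfp\ne\mfm$ has height at most $n$. That alone gives no linear bound here, because $\mfp$ is one specific associated prime. Instead, we use that $g\in\mfp$ is a cubic with $\mfp$ minimal or associated over $I$. Localizing at $\mfp$ with $\ell,\ell'$ part of a system of parameters forces a strong linear dependence among the $a_i,b_i$ modulo $(\ell,\ell')$; for instance, $I_2$ of the coefficient matrix lies in $\mfp$ modulo $(\ell,\ell')$, which forces the matrix to have few independent entries. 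I would try to show that the span of $\ell,\ell',a_i,b_i$ has dimension at most $n+7$, or else reduce to the situation of Lemma~\ref{mult2prime}.

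\textbf{Step 2, case (a).} Modulo $\ell$, each $f_i$ lies in $\mfp'S$, which is generated by the three quadrics $q_1,q_2,q_3$ (the $2\times2$ minors). So we can write
\[
f_i = c_i\ell + \lambda_{i1}q_1+\lambda_{i2}q_2+\lambda_{i3}q_3, \qquad c_i\in S_1,\ \lambda_{ij}\in\kk.
\]
Hence all $f_i$ lie in the subalgebra generated by $\ell$, the at most $6$ entries of $\mathbf N$, and the $c_1,\dots,c_n$. That is at most $n+7$ linear forms, which form a regular sequence after discarding dependencies. Proposition~\ref{regseqsubalgebra} then gives $\pd_S(S/I)\le n+7$.

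\textbf{Main obstacle.} The hard part is case (b), where $\mfp=(\ell,\ell',g)$ with $g$ a cubic. There I expect to need Theorem~\ref{primary} applied to $I$, together with the fact that $\mfp$ has height $3$ but contains only two independent linear forms. The aim is to bound the number of independent $a_i,b_i$ by $n+5$. If that fails, one would bound the height of the relevant associated prime and imitate the counting in Lemma~\ref{mult2prime}.
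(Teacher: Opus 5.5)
Your Step~2(a) is exactly the paper's argument: $\mfp'=I_2(\mathbf N)$ for a $2\times 3$ matrix $\mathbf N$, $f_i=c_i\ell+\sum_j\lambda_{ij}\Delta_j$, and Proposition~\ref{regseqsubalgebra} applied to at most $1+6+n$ linear forms. The genuine gap is case (b). You leave it as an open-ended plan (``I would try to show\ldots'', ``if that fails, one would\ldots''). The claims you sketch there are not justified, for instance that $I_2$ of the coefficient matrix lies in $\mfp$ modulo $(\ell,\ell')$ and forces few independent entries. Theorem~\ref{primary} also gives nothing here: it only bounds by $n$ the heights of associated primes other than $\mfm$, which is no information about a height-$3$ prime. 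So as written the proof is incomplete.

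The missing idea is that case (b) cannot occur. You already observed the key point. Because $g$ is a cubic, the degree-$2$ part of $\mfp=(\ell,\ell',g)$ is $(\ell,\ell')_2$, so $I\subseteq(\ell,\ell')$ and therefore $\height(I)\le 2$. In this paper $I$ has height $3$ throughout. The lemma is applied in Theorem~\ref{mainthm} to a height-$3$ associated prime produced by the associativity formula, hence to a minimal prime of a height-$3$ ideal. The paper's proofs of this lemma and of Lemma~\ref{mult5prime} use $\height(I)=3$ explicitly. Thus case (b) contradicts $\height(I)=3$. Equivalently, $\mfp$ could not be minimal over $I$, since $I\subseteq(\ell,\ell')\subsetneq\mfp$. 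If you replace your Step~2(b) by this one-line contradiction, your proof becomes the paper's proof. If one instead read the lemma as allowing $\mfp$ to be an embedded prime of an ideal of height at most $2$, that would be a different claim, and your sketch does not establish it either.
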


\begin{proof}
    Since $e(S/\mfp) < \height(\mfp) + 1$, Proposition~\ref{multheightineq} implies that $\mfp$ is degenerate. Thus $\mfp = (\ell) + \mfp'$, where $\height{\mfp'} = 2$ and $e(S/\mfp') = 3$. If $\mfp'$ is also degenerate, then $\mfp = (\ell, \ell', h)$, where $h \in S_3$. Then all quadrics $f_i \in (\ell, \ell')$, contradicting $\height(I) = 3$.
    
    So we can assume $\mfp'$ is a height $2$  nondegenerate prime ideal. By Theorem~\ref{minmult}, $\mfp'$ is generated by the $2\times 2$ minors, $\Delta_1, \Delta_2, \Delta_3$ of a $2 \times 3$ matrix with at most 6 distinct linear forms as entries. Then $f_i = a_i \ell + \sum_{j=1}^3 \alpha_{ij} \Delta_j$, where $a_i \in S_1$ and $\alpha_{ij} \in \kk$ for each $i$. So we need at most $n + 7$ variables ($n$ for the $a_i$, 1 for $\ell$, and 6 to write the $\Delta_i$) to express all of the generators of $I$. Thus by Proposition~\ref{regseqsubalgebra}, $\pdim_S(S/I) \leq n+7$.
\end{proof}

\begin{lem} \label{mult4prime}
 If $I$ has an associated prime $\mfp$ with $\height(\mfp) = 3$ and $e(S/\mfp) =  4$, then $$\pdim_S(S/I) \leq \max\{n,8\}.$$
\end{lem}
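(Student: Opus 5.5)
The plan follows the pattern of Lemma~\ref{mult3prime}: classify $\mfp$ using Proposition~\ref{multheightineq} and Theorem~\ref{minmult}. Then write every generator $f_i \in \mfp$ in terms of a bounded number of linear forms and conclude with Proposition~\ref{regseqsubalgebra}. The only generators of $\mfp$ that can occur in a quadric $f_i$ are the linear and quadratic ones.

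\textbf{Case 1: $\mfp$ is nondegenerate.} Then $e(S/\mfp) = 4 = \height(\mfp)+1$, so $\mfp$ has minimal multiplicity and Theorem~\ref{minmult} applies. Case (iii) is impossible since $\height(\mfp)=3$. In case (i), $\mfp$ is the ideal of the Veronese surface, generated by six quadrics in the six linear forms $x_0,\ldots,x_5$. In case (ii), $\mfp$ is the ideal of $2\times 2$ minors of a $2\times 4$ scroll matrix, whose entries involve at most $8$ distinct linear forms. In either case each $f_i$ is a $\kk$-linear combination of quadric generators of $\mfp$, so all $f_i$ lie in a polynomial subring generated by at most $8$ linear forms. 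Proposition~\ref{regseqsubalgebra} then gives $\pd_S(S/I)\le 8$.

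\textbf{Case 2: $\mfp$ is degenerate.} Write $\mfp = (\ell)+\mfp'$ with $\height(\mfp')=2$ and $e(S/\mfp')=4$. We split according to whether $\mfp'$ is degenerate.
\begin{itemize}
\item If $\mfp'$ is also degenerate, then $\mfp=(\ell,\ell',g)$ with $g$ of degree $4$. Then every quadric $f_i$ lies in $(\ell,\ell')$, contradicting $\height(I)=3$, exactly as in Lemma~\ref{mult3prime}.
\item So $\mfp'$ is a nondegenerate height $2$ prime of multiplicity $4 = \height(\mfp')+2$, i.e.\ of almost minimal multiplicity in codimension $2$. Write $f_i = a_i\ell + q_i$, where $q_i$ is a $\kk$-linear combination of the quadrics in $\mfp'$. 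The $a_i$ contribute at most $n$ linear forms and $\ell$ contributes one more.
\end{itemize}

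To finish Case 2 we must bound the number of variables needed for the quadric part of $\mfp'$. A height $2$ nondegenerate prime of degree $4$ defines, after coning, a degree $4$ variety of codimension $2$: for instance a rational quartic curve in $\mathbb{P}^3$, a projected surface, and so on. Its quadric generators span a small space: at most $3$ quadrics, and possibly only $1$ (a complete intersection of type $(2,2)$ has $2$). Each quadric can in principle involve many variables, so a naive count is not enough.

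The idea is to use $\height(I)=3$ and reduce mod $\ell$. The images of the $q_i$ lie in $\mfp'$, and $\height(I+(\ell))\ge 3$ forces the quadrics in $\mfp'$ that appear to generate an ideal of height $2$ modulo $\ell$. Their number is at most $\dim_\kk(\mfp')_2$, so after subtracting combinations the $f_i$ split into at most $3$ genuinely quadratic parts plus multiples of $\ell$. I would then argue that the quadric part lives in few variables: each quadric in $(\mfp')_2$ should be writable with the linear forms of the $a_i$ plus a bounded number of extra ones. Alternatively, I would handle the excess with a nonzerodivisor/short exact sequence argument as in Lemma~\ref{mult1prime}, aiming for a total count of at most $n$.

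\textbf{Main obstacle.} This last step is the one I am least sure of: controlling the quadrics of a multiplicity $4$ height $2$ prime. There is no Theorem~\ref{minmult}-type classification stated for it, so I expect to need a case analysis by the number of quadric generators ($1$, $2$ or $3$). In each case I would show that either the $f_i$ use at most $n$ linear forms in total, or the cone structure reduces to Case 1's bound of $8$.
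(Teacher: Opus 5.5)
Your nondegenerate case is the same as the paper's. The degenerate case, however, has a genuine gap: you leave it open, and the route you sketch cannot close it. Bounding the number of linear forms needed for the quadric part is hopeless, because in the situation that actually occurs the quadrics can involve arbitrarily many variables. For example, take $I=(a_1\ell,\ldots,a_{n-2}\ell,q,q')$ with $q=\sum_{j=1}^m y_jz_j$ and $q'=\sum_{j=1}^m u_jv_j$ in disjoint variables, $m$ arbitrary. So no count of the form $n$ or $8$ exists, and Proposition~\ref{regseqsubalgebra} is the wrong tool here. Your planned case analysis by the number ($1$, $2$ or $3$) of quadric generators of a degree $4$ height $2$ prime is also unnecessary, since only one case can occur.

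The missing step comes right after the observation you already made.
\begin{itemize}
\item Since $I\subseteq I+(\ell)\subseteq\mfp$, we get $\height(I+(\ell))=3$. So the image of $I$ in $S/(\ell)$ is a height $2$ ideal generated by quadrics, and it contains a regular sequence of two quadrics.
\item Lifting these, $I$ contains quadrics $q,q'$ with $\ell,q,q'$ a regular sequence in $\mfp$.
\item Then $(\ell,q,q')\subseteq\mfp$ is a complete intersection of height $3$ and multiplicity $4=e(S/\mfp)$. Being unmixed, the associativity formula forces $\mfp=(\ell,q,q')$; the paper cites \cite[Lemma 8]{engheta2} for this.
\item Hence, after changing generators, $I=(a_1\ell,\ldots,a_{n-2}\ell,q,q')$ with $a_i\in S_1$.
\end{itemize}
Now use the exact sequence
\[0\to S/(I:\ell)\xrightarrow{\cdot\ell} S/I\to S/(I+(\ell))\to 0.\]
Here $I+(\ell)=(\ell,q,q')$ has projective dimension $3$. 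Because $\ell,q,q'$ is a regular sequence, one checks that $I:\ell=(a_1,\ldots,a_{n-2},q,q')$. This ideal is generated by at most $n-2$ linear forms plus a two-generated ideal, so it has projective dimension at most $(n-2)+2=n$. This gives $\pd_S(S/I)\le n$ without any control on the variables appearing in $q,q'$.
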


\begin{proof}
If $\mfp$ is nondegenerate, then $\mfp$ is a prime of minimal multiplicity, in which case $\mfp$ is generated by the $2 \times 2$ minors of a $2 \times 4$ matrix of linear forms (the scroll case) or the $2 \times 2$ minors of a $3 \times 3$ symmetric matrix of linear forms. Thus it takes at most $8$ variables to express the quadrics in $\mfp$, and hence $\pd_S(S/I) \le 8$.  

Otherwise, $\mfp$ is degenerate.  To contain a height 3 ideal of quadrics, $\mfp$ must contain a regular sequence $\ell, q, q'$, where $\ell \in S_1$ and $q, q' \in S_2$.  Since $S/(\ell,q,q')$ is Cohen-Macaulay, it is unmixed of multiplicity $4$.  By \cite[Lemma 8]{engheta2}, $\mfp = (\ell,q,q')$.
After taking a linear combination of the generators, we can assume that $f_n = q$ and $f_{n-1} = q',$ so that $(x, q, q') = (x, f_{n-1},f_n)$. Then we can also assume that for $1 \leq i \leq n-2$, $f_i = a_i x$ for some linearly independent $a_i \in S_1$.

Consider the short exact sequence
\[0 \to S/(I:x) \to S/I \to S/(I + (x)) \to 0.\]
Clearly $I + (x) = (x,q,q')$ and $\pd_S(S/(I+(x)) = 3$.  Since $x,q,q'$ is a regular sequence, we also see that $I:x = (a_1,\ldots,a_{n-2},q,q')$, which is an ideal generated by at most $n-2$ linearly independent linear forms and two quadrics.  Thus $\pd_S(S/(I:x)) \le n$.  By the short exact sequence above, we have $\pd_S(S/I) \le n$ as well.
\end{proof}

\begin{lem} \label{mult5prime}
     If $I$ has an associated prime $\mfp$ with $\height(\mfp) = 3$ and $e(S/\mfp) = 5$, then $$\pdim_S(S/I) \leq 6.$$
\end{lem}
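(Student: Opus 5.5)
Since $I$ is generated by quadrics, $I\subseteq \mfp$, and every $f_i$ lies in the degree-$2$ part $\mfp_2$. The plan is to bound the number of linear forms needed to write the quadrics in $\mfp$, and then apply Proposition~\ref{regseqsubalgebra}, exactly as in the proofs of Lemmas~\ref{mult3prime} and \ref{mult4prime}.

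First I would handle the degenerate case. Suppose $\mfp$ contains a linear form $\ell$. Then $\mfp=(\ell)+\mfp'$, where $\mfp'$ is prime of height $2$ with $e(S/\mfp')=5$.

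If $\mfp'$ is again degenerate, then $\mfp=(\ell,\ell',g)$ with $\deg g=5$. All quadrics in $\mfp$ would then lie in $(\ell,\ell')$, which has height $2$. This contradicts $\height(I)=3$.

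So $\mfp'$ is nondegenerate of height $2$ and multiplicity $5$. The quadrics in $\mfp$ have the form $a\ell+q$ with $q\in\mfp'_2$. If $\mfp'$ contains no quadrics, then $\mfp_2=\ell S_1$, which again contradicts height $3$. If $\mfp'_2$ is nonzero but $I$ still needs height $3$, we need two independent quadric directions modulo $\ell$ inside $\mfp'$, giving a height-$2$ ideal of quadrics inside $\mfp'$. That ideal has multiplicity at most $4$, while $e(S/\mfp')=5$. Since its minimal primes have height $2$ and $\mfp'$ would have to be one of them, the associativity formula gives $5\le 4$, a contradiction. Hence $\mfp$ is nondegenerate (after passing to the nondegenerate core, possibly a cone).

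In the nondegenerate case, $\mfp$ is a height-$3$ prime of almost minimal multiplicity, so Theorem~\ref{AMM-bound} applies after removing cone variables; the cone variables do not appear in the generators. In cases (ii) and (iii), the Betti tables show that $\mfp$ has only $4$, respectively $3$, quadric generators. I must check that these quadrics involve few variables. The intended argument: $\dim X\le 4$ or $\le 5$ for a non-cone variety gives ambient $\mathbb{P}^{\dim X+3}$, so at most $8$ or $9$ variables. That is too many, so counting variables alone fails.

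Instead, I would use the quadrics directly. In case (i), all of $\mfp_2$ consists of the $5$ Pfaffians, which involve at most $\dim X+4\le 8$ variables, and this is still too many. A better route: let $J$ be the ideal generated by $\mfp_2$. In case (i), $J=\mfp$. In case (ii), $J$ is generated by $4$ quadrics, and in case (iii) by $3$ quadrics. In each case $\height J=3$, since $I\subseteq J$ and $\height I=3$. Now $I$ is a height-$3$ ideal of quadrics inside $J$.

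I would bound $\pd(S/I)$ through the sequence $0\to J/I\to S/I\to S/J\to 0$, or directly by recognizing $I=J$ when $n$ is small. The main obstacle is getting down to $6$. I expect the resolution is that $I$ has height $3$, lies in $\mfp$, and $e(S/I)\le 8$ (multiplicity of three general quadrics). The associativity formula then shows $\mfp$ must be almost a linked component, and linkage via a complete intersection of three quadrics in $I$ links $\mfp$ to a height-$3$ ideal of multiplicity $3$. From that I would bound $\pd$ by combining this with the Betti tables: $\pd(S/\mfp)\le 5$, plus $1$, giving $6$. I would try this linkage/mapping-cone estimate first.
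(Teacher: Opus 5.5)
\textbf{There is a genuine gap: your proposal settles the degenerate case but never closes the nondegenerate case.}

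Your treatment of the degenerate case is essentially the paper's argument and is fine. One small tightening: what you need is not ``two independent quadric directions'' in $\mfp'$, but that the ideal $(\mfp'_2)$ has height $2$. This follows from $I\subseteq(\ell)+(\mfp'_2)$ and $\height(I)=3$. Two linearly independent quadrics such as $xy,xz$ need not form a regular sequence.

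The missing observation in the nondegenerate case is that you have already extracted everything you need from Theorem~\ref{AMM-bound}. In all three cases (and also for cones, whose Betti numbers are unchanged) one has $\dim_\kk \mfp_2\le 5$. So $I$ is generated by at most $5$ linearly independent quadrics of $\mfp_2$.
\begin{itemize}
\item If $I$ needs exactly $5$, then $\mfp$ must be in case (i), and $I=(\mfp_2)=\mfp$ is the Pfaffian ideal, so $\pd_S(S/I)=3$.
\item Otherwise $I$ is generated by at most four quadrics, and the known four-quadric bound (\cite{HMMS3}, the $n=4$ row of Table~\ref{table1}) gives $\pd_S(S/I)\le 6$ directly.
\end{itemize}
No variable count or linkage is needed; this is exactly how the paper finishes. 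You came close when you wrote ``recognizing $I=J$ when $n$ is small,'' but you did not invoke the four-quadric bound.

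The route you propose instead would not work as stated. There is no general inequality $\pd_S(S/I)\le \pd_S(S/\mfp)+1$ for an ideal $I$ with associated prime $\mfp$. The sequence $0\to \mfp/I\to S/I\to S/\mfp\to 0$ only gives $\pd_S(S/I)\le\max\{\pd_S(S/\mfp),\pd_S(\mfp/I)\}$, and $\pd_S(\mfp/I)$ is not controlled, since $I$ may have other components and embedded primes. The same objection applies to $0\to J/I\to S/I\to S/J\to 0$ with $J=(\mfp_2)$. Linking $\mfp$ through a complete intersection of three quadrics in $I$ does correctly produce a height-$3$ ideal of multiplicity $8-5=3$. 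However, that link says nothing about the minimal resolution of $I$ itself, which is neither $\mfp$ nor the linked ideal.
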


\begin{proof}
    If $\mfp$ is nondegenerate, then $\mfp$ is one of the primes of almost minimal multiplicity in Theorem~\ref{AMM-bound}.  If $n \le 4$, then $\pd_S(S/I) \le 6$ by \cite[Theorem 1.3]{HMMS1}, so we may assume $n \ge 5$.  If $n = 5$, then the only option is that $I = \mfp$ and $\mfp$ is the ideal of Pfaffians from Case~(i) of Theorem~\ref{AMM-bound}.  Thus $\pd_S(S/I) = 3$.  If $n \ge 6$, then we get a contradiction as $\mfp$ contains at most $5$ linearly independent quadrics by Theorem~\ref{AMM-bound}.

    Otherwise $\mfp$ is degenerate, so $\mfp = (\ell) + \mfp'$, where $\ell \in S_1$ and $\mfp'$ is a prime with $\height(\mfp') = 2$ and $e(S/\mfp') = 5$. If $\mfp'$ is also degenerate, then $\mfp = (\ell, \ell', h)$, where $\ell' \in S_1$ and $h \in S_5$. In this case $I \subseteq (\ell, \ell')$, which contradicts $\height(I) = 3$.
    
    So  $\mfp'$ must be nondegenerate. Let $J = (\mfp'_2)$, the ideal generated by the quadrics in $\mfp'$.   If $\height(J) = 1$, then $I \subseteq (\ell) + J$, so $\height(I) \leq \height((\ell) + J) = 2$, a contradiction.
    So $\height(J) = 2$, in which case there exists a regular sequence $q, q'$ of quadrics in $J$.  As $\mfp' \supseteq J \supseteq (q,q')$ are ideals of height $2$, we have $e(S/\mfp') \leq  e(S/J) \leq e(S/(q, q')) = 4.$ This contradicts $e(S/\mfp) = e(S/\mfp') = 5$.

\end{proof}

\section{The Main Result}\label{SecMain}

We collect the individual bounds from the previous section to prove our main result below.

\begin{thm}\label{mainthm} Let $\kk$ be a field and $S$ a polynomial ring over $\kk$.
  If $I = (f_1, \hdots, f_n) \subseteq S$ is a graded ideal of height 3 generated by $n$ quadrics, then 
  \[\pdim_S(S/I) \leq \max\{3n+1,4n-10\}.\]
\end{thm}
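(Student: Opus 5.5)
First reduce to the case that $\kk$ is algebraically closed. Let $\overline{\kk}$ be an algebraic closure and $\overline{S} = S\otimes_\kk \overline{\kk}$. Since $S\to\overline{S}$ is faithfully flat, the extended ideal $I\overline{S}$ has the same graded Betti numbers as $I$, so $\pd_{\overline{S}}(\overline{S}/I\overline{S}) = \pd_S(S/I)$. Its height is still $3$, and it is still generated by the same $n$ quadrics. So we may assume $\kk$ is algebraically closed, which is the standing hypothesis of Section~\ref{SecBounds}.

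Next, decompose by multiplicity. Since $I$ is generated by $n\ge 3$ quadrics of height $3$, it contains a regular sequence of three quadrics $g_1,g_2,g_3$; over an infinite field, general linear combinations of the $f_i$ work. Then $(g_1,g_2,g_3)\subseteq I$ are ideals of the same height, so
\[e(S/I)\le e(S/(g_1,g_2,g_3)) = 8.\]
By the Associativity Formula, $e(S/I)$ is a sum of positive terms $e(S/\mfp)\lambda(S_\mfp/I_\mfp)$ over the associated primes $\mfp$ of height $3$, and there is at least one such prime. Fix one such $\mfp$. Then $e(S/\mfp)\le 8$, and the plan is to show we may in fact assume $e(S/\mfp)\le 5$.

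Here is how I would get $e(S/\mfp)\le 5$. The prime $\mfp$ contains $I$, which contains the complete intersection $(g_1,g_2,g_3)$. If $\mfp$ is nondegenerate with $e(S/\mfp)\ge 6$, I would use that $\mfp$ is a minimal prime of the complete intersection $C=(g_1,g_2,g_3)$. By associativity for $C$, $\mfp$ is then one component of an $8$-dimensional degree count, so the remaining height-$3$ associated primes of $C$ have total multiplicity at most $2$. I would then try to use linkage: $C:\mfp$ is a linked ideal of multiplicity at most $2$. Linkage through three quadrics forces $\mfp$ to have small regularity, and the linked ideal of degree $\le 2$ is degenerate, containing linear forms. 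This should force $\mfp$ itself to be generated in degree $\le 2$ by few quadrics, expressible in a bounded number of variables. That would give a constant bound on $\pd_S(S/I)$ by Proposition~\ref{regseqsubalgebra}, or at least bounded by $n+c$. This is the step I expect to be the main obstacle. An alternative route is Theorem~\ref{maxmult}: applying it with $J=(g_1,g_2)$ plus a third quadric shows that an almost complete intersection of quadrics drops multiplicity. One could then try to argue that a height-$3$ component of multiplicity $6,7,8$ makes $I$ locally (after localizing at $\mfp$) a complete intersection of three quadrics, pinning $I$ down enough to bound $\pd$ directly. If $\mfp$ is degenerate with $e\ge 6$, the same style as Lemmas~\ref{mult3prime} and \ref{mult5prime} applies: write $\mfp=(\ell)+\mfp'$ and use $e(S/\mfp')\le 4$ from $\mfp'$ containing two independent quadrics, which gives a contradiction.

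Once $e(S/\mfp)\in\{1,\dots,5\}$, the bound follows case by case from Lemmas~\ref{mult1prime}, \ref{mult2prime}, \ref{mult3prime}, \ref{mult4prime} and \ref{mult5prime}:
\[3n+1,\quad \max\{3n-4,4n-10\},\quad n+7,\quad \max\{n,8\},\quad 6.\]
Each of these is at most $\max\{3n+1,4n-10\}$ for $n\ge 3$. For instance, $n+7\le 3n+1$ once $n\ge 3$, and $8\le 3n+1$.
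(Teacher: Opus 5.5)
Your proposal has a genuine gap at the step you yourself flagged. You never exclude or handle a nondegenerate height-$3$ associated prime $\mfp$ with $e(S/\mfp)\in\{6,7,8\}$. Such primes genuinely occur: for instance $I=\mfp$ may itself be a prime complete intersection of three quadrics. The linkage sketch is only a heuristic (``this should force $\mfp$ to be generated by few quadrics'') and is not carried out. Your alternative use of Theorem~\ref{maxmult} is also set up incorrectly. With $J=(g_1,g_2)$ and $F$ a third quadric one has $\height(J+(F))=3\neq \height J$, so the theorem's hypothesis $\height I=\height J$ fails. (Your degenerate-case argument via $e(S/\mfp')\le 4$ is fine, but it only covers half of the needed reduction.)

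The missing idea is to bound $e(S/I)$ rather than $e(S/\mfp)$, by applying Theorem~\ref{maxmult} to $I$ itself with $J$ the height-$3$ complete intersection $C=(g_1,g_2,g_3)$.
\begin{enumerate}[(1)]
\item If $I=C$, then $\pd_S(S/I)=3$.
\item Otherwise choose a generator $F=f_i\notin C$. Then $C$ is Cohen--Macaulay, the Artinian reduction of $S/C$ has socle degree $3$, and $\height(C+(F))=3$. Part~(i) gives
\[e(S/I)\le e(S/(C+(F)))\le 8-\max\{1,3-2+1\}=6.\]
\item If $e(S/I)=6$, part~(ii) makes $S/(C+(F))$ Cohen--Macaulay, hence unmixed. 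Since $C+(F)\subseteq I$ have the same height and multiplicity, they are equal by \cite[Lemma 8]{engheta2}, so $\pd_S(S/I)=3$.
\end{enumerate}
Hence you may assume $e(S/I)\le 5$. The associativity formula then gives $e(S/\mfp)\le e(S/I)\le 5$ for every height-$3$ associated prime $\mfp$. No analysis of large-multiplicity primes, degenerate or not, is required. With this replacement, your reduction to $\overline{\kk}$ and your final comparison of the five lemma bounds against $\max\{3n+1,4n-10\}$ go through as written.
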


\begin{proof}  Note that we can assume $\overline{\kk} = \kk$ without changing $\pd_S(S/I)$.
	Since $\height (I) = 3$ we can choose the $f_i$ such that $f_1, f_2, f_3$ is a regular sequence of length 3. 
	We will break into cases based on the multiplicity of $S/I$, first noting that $e(S/I) = 8$ if and only if $I$ is a complete intersection, in which case $\pdim_S(S/I) = 3$. 
	
	If $I$ is not a complete intersection, then $n \ge 4$.  Applying Theorem~\ref{maxmult}(i) with $J = (f_1,f_2,f_3)$, we get that $e(S/I) \le e(S/(f_1,f_2,f_3,f_4)) \le 6$.  If $e(S/I) = 6$, then $e(S/(f_1,f_2,f_3,f_4)) = 6$.  By Theorem~\ref{maxmult}(ii), $S/(f_1,f_2,f_3,f_4)$ is Cohen-Macaulay, and in particular, unmixed.  Since $I \supseteq (f_1,f_2,f_3,f_4)$ and since both ideals have height $3$ and multiplicity $6$, they must be equal \cite[Lemma 8]{engheta2}.  Therefore $\pd_S(S/I) = \height I = 3$.
	
	So we may assume $1 \leq e(S/I) \leq 5$.  By the associativity formula, $I$ must have an associated prime of height $3$ and multiplicity $1, 2, 3, 4,$ or $5$. Thus, to bound $\pdim_S(S/I)$, it is sufficient to take the maximum of the bounds in Lemmas~\ref{mult1prime}, \ref{mult2prime}, \ref{mult3prime}, \ref{mult4prime}, and \ref{mult5prime}.
\end{proof}

Note that for $n \ge 11$, the maximum achieved by $4n-10$.  Partial progress has been made analyzing the projective dimension of height $4$ almost complete intersections of quadrics.  One can use similar methods to bound the projective dimension when the multiplicity is small; see \cite{EK19}.  However, without stronger classification theorems for prime ideals or more general bounds, it seems difficult to push these methods much further.

\section*{Acknowledgements}

 Computations with Macaulay2 \cite{M2} were very helpful in the writing of this paper. Mantero was partially supported by Simons Foundation Grant \#962192. McCullough was supported by  National Science Foundation grants DMS--1900792 and DMS--2401256. 

\bibliographystyle{alpha}
\bibliography{h3qi}

@article {AH0,
    AUTHOR = {Ananyan, Tigran and Hochster, Melvin},
     TITLE = {Ideals generated by quadratic polynomials},
   JOURNAL = {Math. Res. Lett.},
  FJOURNAL = {Mathematical Research Letters},
    VOLUME = {19},
      YEAR = {2012},
    NUMBER = {1},
     PAGES = {233--244},
      ISSN = {1073-2780},
   MRCLASS = {13D05 (13A15 13F20)},
  MRNUMBER = {2923188},
MRREVIEWER = {Marco Fontana},
       DOI = {10.4310/MRL.2012.v19.n1.a18},
       URL = {https://doi.org/10.4310/MRL.2012.v19.n1.a18},
}

@article {AH1,
    AUTHOR = {Ananyan, Tigran and Hochster, Melvin},
     TITLE = {Small subalgebras of polynomial rings and {S}tillman's
              {C}onjecture},
   JOURNAL = {J. Amer. Math. Soc.},
  FJOURNAL = {Journal of the American Mathematical Society},
    VOLUME = {33},
      YEAR = {2020},
    NUMBER = {1},
     PAGES = {291--309},
      ISSN = {0894-0347},
   MRCLASS = {13D05 (13F20)},
  MRNUMBER = {4066476},
       DOI = {10.1090/jams/932},
       URL = {https://doi.org/10.1090/jams/932},
}

@article {AH2,
    AUTHOR = {Ananyan, Tigran and Hochster, Melvin},
     TITLE = {Strength conditions, small subalgebras, and {S}tillman bounds
              in degree {$\leq 4$}},
   JOURNAL = {Trans. Amer. Math. Soc.},
  FJOURNAL = {Transactions of the American Mathematical Society},
    VOLUME = {373},
      YEAR = {2020},
    NUMBER = {7},
     PAGES = {4757--4806},
      ISSN = {0002-9947},
   MRCLASS = {13D05 (13F20)},
  MRNUMBER = {4127862},
       DOI = {10.1090/tran/8060},
       URL = {https://doi.org/10.1090/tran/8060},
}

@article {AN72,
    AUTHOR = {Artin, M. and Nagata, M.},
     TITLE = {Residual intersections in {C}ohen-{M}acaulay rings},
   JOURNAL = {J. Math. Kyoto Univ.},
  FJOURNAL = {Journal of Mathematics of Kyoto University},
    VOLUME = {12},
      YEAR = {1972},
     PAGES = {307--323},
      ISSN = {0023-608X},
   MRCLASS = {13H10},
  MRNUMBER = {301006},
MRREVIEWER = {Tadayuki\ Matsuoka},
       DOI = {10.1215/kjm/1250523522},
       URL = {https://doi.org/10.1215/kjm/1250523522},
}

@misc{CLM25,
      title={Explicit {S}tillman bounds for all degrees}, 
      author={Giulio Caviglia and Yihui Liang and Cheng Meng},
      year={2025},
      eprint={2507.19617},
      archivePrefix={arXiv},
      primaryClass={math.AC},
      url={https://arxiv.org/abs/2507.19617}, 
}

@article {PS09,
    AUTHOR = {Peeva, Irena and Stillman, Mike},
     TITLE = {Open problems on syzygies and {H}ilbert functions},
   JOURNAL = {J. Commut. Algebra},
  FJOURNAL = {Journal of Commutative Algebra},
    VOLUME = {1},
      YEAR = {2009},
    NUMBER = {1},
     PAGES = {159--195},
      ISSN = {1939-0807},
   MRCLASS = {13D02 (13D40)},
  MRNUMBER = {2462384},
MRREVIEWER = {Elena Guardo},
       DOI = {10.1216/JCA-2009-1-1-159},
       URL = {https://doi.org/10.1216/JCA-2009-1-1-159},
}

@article {ESS,
    AUTHOR = {Erman, Daniel and Sam, Steven V. and Snowden, Andrew},
     TITLE = {Big polynomial rings and {S}tillman's conjecture},
   JOURNAL = {Invent. Math.},
  FJOURNAL = {Inventiones Mathematicae},
    VOLUME = {218},
      YEAR = {2019},
    NUMBER = {2},
     PAGES = {413--439},
      ISSN = {0020-9910},
   MRCLASS = {13D02 (13A02)},
  MRNUMBER = {4011703},
MRREVIEWER = {Haohao Wang},
       DOI = {10.1007/s00222-019-00889-y},
       URL = {https://doi.org/10.1007/s00222-019-00889-y},
}

@article {DLL,
    AUTHOR = {Draisma, Jan and Laso\'{n}, Micha\l  and Leykin, Anton},
     TITLE = {Stillman's conjecture via generic initial ideals},
   JOURNAL = {Comm. Algebra},
  FJOURNAL = {Communications in Algebra},
    VOLUME = {47},
      YEAR = {2019},
    NUMBER = {6},
     PAGES = {2384--2395},
      ISSN = {0092-7872},
   MRCLASS = {13A02 (13E99 13F25 13P10)},
  MRNUMBER = {3957104},
MRREVIEWER = {Irena Swanson},
       DOI = {10.1080/00927872.2019.1574806},
       URL = {https://doi.org/10.1080/00927872.2019.1574806},
}

@book {Atiyah-Macdonald,
    AUTHOR = {Atiyah, M. F. and Macdonald, I. G.},
     TITLE = {Introduction to commutative algebra},
 PUBLISHER = {Addison-Wesley Publishing Co., Reading, Mass.-London-Don
              Mills, Ont.},
      YEAR = {1969},
     PAGES = {ix+128},
   MRCLASS = {13.00},
  MRNUMBER = {0242802},
MRREVIEWER = {J. A. Johnson},
}

@book {harris,
    AUTHOR = {Harris, Joe},
     TITLE = {Algebraic geometry},
    SERIES = {Graduate Texts in Mathematics},
    VOLUME = {133},
      NOTE = {A first course},
 PUBLISHER = {Springer-Verlag, New York},
      YEAR = {1992},
     PAGES = {xx+328},
      ISBN = {0-387-97716-3},
   MRCLASS = {14-01},
  MRNUMBER = {1182558},
MRREVIEWER = {Liam O'Carroll},
       DOI = {10.1007/978-1-4757-2189-8},
       URL = {https://doi.org/10.1007/978-1-4757-2189-8},
}

@book {bertini,
AUTHOR = {Bertini, Eugenio},
TITLE = {Introduzione alla geometria proiettive degli iperspazi},
PUBLISHER = {Enrico Spoerri, Pisa},
YEAR = {1907},
}

@article {BS06,
    AUTHOR = {Brodmann, Markus and Schenzel, Peter},
     TITLE = {On varieties of almost minimal degree in small codimension},
   JOURNAL = {J. Algebra},
  FJOURNAL = {Journal of Algebra},
    VOLUME = {305},
      YEAR = {2006},
    NUMBER = {2},
     PAGES = {789--801},
      ISSN = {0021-8693},
   MRCLASS = {14M07 (14J10)},
  MRNUMBER = {2266853},
MRREVIEWER = {Maria Luisa Spreafico},
       DOI = {10.1016/j.jalgebra.2006.03.027},
       URL = {https://doi.org/10.1016/j.jalgebra.2006.03.027},
}

@incollection {minimalmult,
    AUTHOR = {Eisenbud, David and Harris, Joe},
     TITLE = {On varieties of minimal degree (a centennial account)},
 BOOKTITLE = {Algebraic geometry, {B}owdoin, 1985 ({B}runswick, {M}aine,
              1985)},
    SERIES = {Proc. Sympos. Pure Math.},
    VOLUME = {46},
     PAGES = {3--13},
 PUBLISHER = {Amer. Math. Soc., Providence, RI},
      YEAR = {1987},
   MRCLASS = {14J40 (14J26)},
  MRNUMBER = {927946},
MRREVIEWER = {Allen B. Altman},
       DOI = {10.1090/pspum/046.1/927946},
       URL = {https://doi.org/10.1090/pspum/046.1/927946},
}

@article {ESS19,
    AUTHOR = {Erman, Daniel and Sam, Steven V. and Snowden, Andrew},
     TITLE = {Cubics in 10 variables vs. cubics in 1000 variables:
              uniformity phenomena for bounded degree polynomials},
   JOURNAL = {Bull. Amer. Math. Soc. (N.S.)},
  FJOURNAL = {American Mathematical Society. Bulletin. New Series},
    VOLUME = {56},
      YEAR = {2019},
    NUMBER = {1},
     PAGES = {87--114},
      ISSN = {0273-0979},
   MRCLASS = {13A02 (13D02)},
  MRNUMBER = {3886145},
MRREVIEWER = {Haohao Wang},
       DOI = {10.1090/bull/1652},
       URL = {https://doi.org/10.1090/bull/1652},
}

@article {FMP16,
    AUTHOR = {Fl{\o}ystad, Gunnar and McCullough, Jason and Peeva, Irena},
     TITLE = {Three themes of syzygies},
   JOURNAL = {Bull. Amer. Math. Soc. (N.S.)},
  FJOURNAL = {American Mathematical Society. Bulletin. New Series},
    VOLUME = {53},
      YEAR = {2016},
    NUMBER = {3},
     PAGES = {415--435},
      ISSN = {0273-0979},
   MRCLASS = {13D02},
  MRNUMBER = {3501795},
MRREVIEWER = {Liam O'Carroll},
       DOI = {10.1090/bull/1533},
       URL = {https://doi.org/10.1090/bull/1533},
}

@Misc{M2,
          author = {Grayson, Daniel R. and Stillman, Michael E.},
          title = {Macaulay2, a software system for research in algebraic geometry},
          howpublished = {Available at \url{http://www.math.uiuc.edu/Macaulay2/}}
        }

@article {HNTT20,
    AUTHOR = {H\`a, Huy T\`ai and Nguyen, Hop Dang and Trung, Ngo Viet and
              Trung, Tran Nam},
     TITLE = {Symbolic powers of sums of ideals},
   JOURNAL = {Math. Z.},
  FJOURNAL = {Mathematische Zeitschrift},
    VOLUME = {294},
      YEAR = {2020},
    NUMBER = {3-4},
     PAGES = {1499--1520},
      ISSN = {0025-5874,1432-1823},
   MRCLASS = {13C15 (13D07 18G15)},
  MRNUMBER = {4074049},
MRREVIEWER = {Elo\'isa\ Grifo},
       DOI = {10.1007/s00209-019-02323-8},
       URL = {https://doi.org/10.1007/s00209-019-02323-8},
}

@article {Huneke83,
    AUTHOR = {Huneke, Craig},
     TITLE = {Strongly {C}ohen-{M}acaulay schemes and residual
              intersections},
   JOURNAL = {Trans. Amer. Math. Soc.},
  FJOURNAL = {Transactions of the American Mathematical Society},
    VOLUME = {277},
      YEAR = {1983},
    NUMBER = {2},
     PAGES = {739--763},
      ISSN = {0002-9947},
   MRCLASS = {13H10 (14M05)},
  MRNUMBER = {694386},
MRREVIEWER = {Ulrich Orbanz},
       DOI = {10.2307/1999234},
       URL = {https://doi.org/10.2307/1999234},
}

@article {HMMS1,
    AUTHOR = {Huneke, Craig and Mantero, Paolo and McCullough, Jason and
              Seceleanu, Alexandra},
     TITLE = {The projective dimension of codimension two algebras presented
              by quadrics},
   JOURNAL = {J. Algebra},
  FJOURNAL = {Journal of Algebra},
    VOLUME = {393},
      YEAR = {2013},
     PAGES = {170--186},
      ISSN = {0021-8693},
   MRCLASS = {13D05 (13D02 14C40)},
  MRNUMBER = {3090065},
MRREVIEWER = {Haohao Wang},
       DOI = {10.1016/j.jalgebra.2013.06.038},
       URL = {https://doi.org/10.1016/j.jalgebra.2013.06.038},
}

@article {HMMS4,
    AUTHOR = {Huneke, Craig and Mantero, Paolo and McCullough, Jason and
              Seceleanu, Alexandra},
     TITLE = {A multiplicity bound for graded rings and a criterion for the
              {C}ohen-{M}acaulay property},
   JOURNAL = {Proc. Amer. Math. Soc.},
  FJOURNAL = {Proceedings of the American Mathematical Society},
    VOLUME = {143},
      YEAR = {2015},
    NUMBER = {6},
     PAGES = {2365--2377},
      ISSN = {0002-9939},
   MRCLASS = {13C14 (13D40 13H15)},
  MRNUMBER = {3326019},
MRREVIEWER = {Florian Enescu},
       DOI = {10.1090/S0002-9939-2015-12612-3},
       URL = {https://doi.org/10.1090/S0002-9939-2015-12612-3},
}

@article {HMMS3,
    AUTHOR = {Huneke, Craig and Mantero, Paolo and McCullough, Jason and
              Seceleanu, Alexandra},
     TITLE = {A tight bound on the projective dimension of four quadrics},
   JOURNAL = {J. Pure Appl. Algebra},
  FJOURNAL = {Journal of Pure and Applied Algebra},
    VOLUME = {222},
      YEAR = {2018},
    NUMBER = {9},
     PAGES = {2524--2551},
      ISSN = {0022-4049},
   MRCLASS = {13D05 (13C40 13D02 14M07)},
  MRNUMBER = {3783004},
MRREVIEWER = {Keri Sather-Wagstaff},
       DOI = {10.1016/j.jpaa.2017.10.005},
       URL = {https://doi.org/10.1016/j.jpaa.2017.10.005},
}

@article {HU88,
    AUTHOR = {Huneke, Craig and Ulrich, Bernd},
     TITLE = {Residual intersections},
   JOURNAL = {J. Reine Angew. Math.},
  FJOURNAL = {Journal f\"{u}r die Reine und Angewandte Mathematik. [Crelle's
              Journal]},
    VOLUME = {390},
      YEAR = {1988},
     PAGES = {1--20},
      ISSN = {0075-4102},
   MRCLASS = {13H10 (13C13 13D10 14B07)},
  MRNUMBER = {953673},
MRREVIEWER = {J\"{u}rgen Herzog},
       DOI = {10.1515/crll.1988.390.1},
       URL = {https://doi.org/10.1515/crll.1988.390.1},
}

@book {matsumura,
    AUTHOR = {Matsumura, Hideyuki},
     TITLE = {Commutative ring theory},
    SERIES = {Cambridge Studies in Advanced Mathematics},
    VOLUME = {8},
      NOTE = {Translated from the Japanese by M. Reid},
 PUBLISHER = {Cambridge University Press, Cambridge},
      YEAR = {1986},
     PAGES = {xiv+320},
      ISBN = {0-521-25916-9},
   MRCLASS = {13-01},
  MRNUMBER = {879273},
MRREVIEWER = {W. V. Vasconcelos},
}

@article {M1,
    AUTHOR = {McCullough, Jason},
     TITLE = {A family of ideals with few generators in low degree and large
              projective dimension},
   JOURNAL = {Proc. Amer. Math. Soc.},
  FJOURNAL = {Proceedings of the American Mathematical Society},
    VOLUME = {139},
      YEAR = {2011},
    NUMBER = {6},
     PAGES = {2017--2023},
      ISSN = {0002-9939},
   MRCLASS = {13D05 (13D02)},
  MRNUMBER = {2775379},
MRREVIEWER = {Liana M. \c{S}ega},
       DOI = {10.1090/S0002-9939-2010-10792-X},
       URL = {https://doi.org/10.1090/S0002-9939-2010-10792-X},
}

@article {Mc2,
    AUTHOR = {McCullough, Jason},
     TITLE = {Prime ideals and three-generated ideals with large regularity},
   JOURNAL = {C. R. Math. Acad. Sci. Paris},
  FJOURNAL = {Comptes Rendus Math\'ematique. Acad\'emie des Sciences. Paris},
    VOLUME = {362},
      YEAR = {2024},
     PAGES = {251--255},
      ISSN = {1631-073X,1778-3569},
   MRCLASS = {13D02 (13D05 13P20)},
  MRNUMBER = {4745334},
MRREVIEWER = {Dipankar\ Ghosh},
       DOI = {10.5802/crmath.544},
       URL = {https://doi.org/10.5802/crmath.544},
}

@article {EK19,
    AUTHOR = {El Khoury, Sabine},
     TITLE = {On the projective dimension of 5 quadric almost complete
              intersections with low multiplicities},
   JOURNAL = {Rocky Mountain J. Math.},
  FJOURNAL = {The Rocky Mountain Journal of Mathematics},
    VOLUME = {49},
      YEAR = {2019},
    NUMBER = {5},
     PAGES = {1491--1546},
      ISSN = {0035-7596},
   MRCLASS = {13D02 (13C40 13D05)},
  MRNUMBER = {4010571},
MRREVIEWER = {Quang Hoa Tran},
       DOI = {10.1216/rmj-2019-49-5-1491},
       URL = {https://doi.org/10.1216/rmj-2019-49-5-1491},
}

@article {engheta2,
    AUTHOR = {Engheta, Bahman},
     TITLE = {On the projective dimension and the unmixed part of three
              cubics},
   JOURNAL = {J. Algebra},
  FJOURNAL = {Journal of Algebra},
    VOLUME = {316},
      YEAR = {2007},
    NUMBER = {2},
     PAGES = {715--734},
      ISSN = {0021-8693},
   MRCLASS = {13F20 (13C40 16E10)},
  MRNUMBER = {2358611},
MRREVIEWER = {Wenting Tong},
       DOI = {10.1016/j.jalgebra.2006.11.018},
       URL = {https://doi.org/10.1016/j.jalgebra.2006.11.018},
}

@article {MM19,
    AUTHOR = {Mantero, Paolo and McCullough, Jason},
     TITLE = {The projective dimension of three cubics is at most 5},
   JOURNAL = {J. Pure Appl. Algebra},
  FJOURNAL = {Journal of Pure and Applied Algebra},
    VOLUME = {223},
      YEAR = {2019},
    NUMBER = {4},
     PAGES = {1383--1410},
      ISSN = {0022-4049},
   MRCLASS = {13D05 (13D02 14M06 14M07)},
  MRNUMBER = {3906508},
MRREVIEWER = {Haohao Wang},
       DOI = {10.1016/j.jpaa.2018.06.009},
       URL = {https://doi.org/10.1016/j.jpaa.2018.06.009},
}

@incollection {MS13,
    AUTHOR = {McCullough, Jason and Seceleanu, Alexandra},
     TITLE = {Bounding projective dimension},
 BOOKTITLE = {Commutative algebra},
     PAGES = {551--576},
 PUBLISHER = {Springer, New York},
      YEAR = {2013},
   MRCLASS = {13D05 (13P20 16E10)},
  MRNUMBER = {3051385},
MRREVIEWER = {Ellen E. Kirkman},
       DOI = {10.1007/978-1-4614-5292-8\_17},
       URL = {https://doi.org/10.1007/978-1-4614-5292-8_17},
}

\end{document}